\newcounter{contador}
\newtheorem{propo}[contador]{Proposition}
\newtheorem{teo}[contador]{Theorem}
\newtheorem{lem}[contador]{Lemma}
\newtheorem{nota}[contador]{Remark}
\newtheorem{example}[contador]{Example}
\newcommand{\rec}{\noindent} 
\newcommand{\fiexemple}{\, \hfill$\Box$} 
\newcommand{\dps}{\displaystyle} 
\newcommand{\R}{{\mathbb R}}
\newcommand{\C}{{\mathbb C}}
\newcommand{\N}{{\mathbb N}}
\newcommand{\Z}{{\mathbb Z}}
\newcommand{\Q}{{\mathbb Q}}
\newcommand{\U}{{\cal{U}}}
\title{Non-autonomous 2-periodic \\Gumovski-Mira difference equations \footnote{{\bf Acknowledgements}. GSD-UAB and CoDALab Groups are
supported by the Government of Catalonia through the SGR program. They are also supported by MCYT through grants MTM2008-03437 (first and second
authors) and DPI2008-06699-C02-02 (third author).}}
\author{Anna Cima$^{(1)}$, Armengol Gasull$^{(1)}$ and V\'{\i}ctor Ma\~{n}osa $^{(2)}$
\\*[.1truecm]
{\small \textsl{$^{(1)}$ Dept. de Matem\`{a}tiques, Facultat de Ci\`{e}ncies,}}
\\*[-.25truecm] {\small \textsl{Universitat Aut\`{o}noma de Barcelona,}}
\\*[-.25truecm] {\small \textsl{08193 Bellaterra, Barcelona, Spain}}
\\*[-.25truecm] {\small \textsl{cima@mat.uab.cat, gasull@mat.uab.cat}}
\\*[-.25truecm]
\\*[-.25truecm] {\small \textsl{$^{(2)}$ Dept. de Matem\`{a}tica Aplicada III (MA3),}}
\\*[-.25truecm] {\small \textsl{Control, Dynamics and Applications Group (CoDALab)}}
\\*[-.25truecm] {\small \textsl{Universitat Polit\`{e}cnica de Catalunya (UPC)}}
\\*[-.25truecm] {\small \textsl{Colom 1, 08222 Terrassa, Spain}}
\\*[-.25truecm] {\small \textsl{victor.manosa@upc.edu}}}
\begin{document}
\maketitle

\begin{abstract}
We consider two types of non-autonomous $2$-periodic Gumovski-Mira
difference equations. We show that while the corresponding
autonomous recurrences are conjugated, the behavior of the sequences
generated by the $2$-periodic ones differ dramatically:  in one case
the behavior of the sequences  is simple (\textsl{integrable}) and
in the other case it is much more complicated (\textsl{chaotic}). We
also present a global study  of the integrable case that includes
which periods appear for the recurrence.
\end{abstract}

\rec {\sl Keywords:} Integrable and chaotic difference equations and
maps; rational difference equations with periodic coefficients;
perturbed twist maps. \newline

\rec {\sl 2000 Mathematics Subject Classification:} \texttt{39A20, 39A11}


\section{Introduction}

Autonomous difference equations are a classical tool for the
modeling of ecological systems, see for instance \cite{H,M,T}. One
of the modifications applied in the models in order to adapt them to
more realistic situations consists in converting the recurrences
into non-autonomous ones changing one of the constant parameters by
a periodic cycle (see \cite{C,CH,CH2,ES1,ES2,Sv} and references
therein). In this situation it is said that the model takes into
account seasonality. For instance a parameter taking values in a
cycle of period 4 could model an ecological situation that has
different features during spring, summer, autumn or winter.

In this paper we will consider a very simple autonomous recurrence, which depends on a unique parameter and another one which is conjugated
 to this one. Then we will show that  changing the constant parameter by a two cycle has a completly different effect in both cases.
  Indeed in one of them the recurrence has an invariant while in the second one the recurrence looks as a chaotic one.
  This phenomenon is very surprising and shows that this procedure used in modelling of changing the constant parameters by periodic cycles is quite delicate.

More specifically, we take
\begin{equation}\label{miraa}
\quad x_{n+2}=-x_{n}+\frac{\alpha x_{n+1}}{1+x_{n+1}^2}\quad\mbox{ with }\quad \alpha>0,
\end{equation}
which is one of the recurrences considered by  Gumovski and Mira in
\cite{GM}. Performing the change of variables $x_n=\sqrt{\alpha}\,
y_n,$ it writes as
\begin{equation}\label{mirab}y_{n+2}=-y_{n}+\frac{ y_{n+1}}{\beta+y_{n+1}^2}\quad\mbox{ with }\quad \beta=1/\alpha>0.
\end{equation}

We will see that when considering the corresponding $2$-periodic
recurrences
\begin{equation}\label{miracaotica}
x_{n+2}=-x_{n}+\frac{\alpha_n x_{n+1}}{1+x_{n+1}^2},
\end{equation}
and
\begin{equation}\label{mirainte}
x_{n+2}=-x_{n}+\frac{ x_{n+1}}{\beta_n+x_{n+1}^2},
\end{equation}
where $\{\alpha_n\}_n$ is a $2$-periodic cycle of positive values and $\beta_n=1/\alpha_n,n\ge0$, their behaviors are completely different. For the second case it is very
simple, indeed \textsl{integrable} (that is, it has a two-periodic invariant), see Theorem \ref{main}. On the other hand, for the first
recurrence and suitable initial conditions the points $\{x_n\}_n$ seem to fill densely many disjoint closed intervals. In fact its unfolding $(x_{n},x_{n+1})$ presents all the complexity of the perturbed twist maps, see Section~\ref{se:3} and also Figure 1.

More specifically, in Subsection \ref{recu-g}, we  characterize all the sequences generated by the recurrence \eqref{mirainte}. They are:

\begin{enumerate}[(i)]
\item Constant sequences or periodic sequences with period $2q$ for some $q\in\N$;
\item Sequences $\{x_n\}_{n\in\N}$ such that its adherence is formed by the same orbit plus an accumulation point;
\item Sequences $\{x_n\}_{n\in\N}$ such that its adherence is formed by one closed interval;
\item Sequences $\{x_n\}_{n\in\N}$ such that its adherence is formed by two closed intervals.

\end{enumerate}

With respect to recurrence \eqref{miracaotica}, we will see in Section \ref{se:3} that there appear the same four types of sequences given above plus other ones:

\begin{enumerate}
\item[(v)] Sequences $\{x_n\}_{n\in\N}$ such that its adherence is formed by the same orbit plus some accumulation points;

\item[(vi)] Sequences $\{x_n\}_{n\in\N}$ such that its adherence is formed by several closed intervals.
\end{enumerate}

The differences are perhaps more clear when considering the \textsl{composition maps} (\cite{CGM11a,CGM11b}) associated to the non-autonomous
difference equations, which in this case are given by $F_{\alpha_2,\alpha_1}:=F_{\alpha_2}\circ F_{\alpha_1}$ and $G_{\beta_2,\beta_1}:=G_{\beta_{2}}\circ G_{\beta_1}$, where
$$
F_{\alpha}(x,y)=\left(y,-x+\frac{\alpha y}{1+y^2}\right) \quad\mbox{ and }\quad G_{\beta}(x,y)=\left(y,-x+\frac{y}{\beta+y^2}\right),
$$
are the corresponding maps associated to the recurrences in (\ref{miraa}) and \eqref{mirab}.
Notice that for instance
\[
(x_1,x_2)\xrightarrow{G_{\beta_1}}(x_2,x_3)\xrightarrow{G_{\beta_2}}(x_3,x_4)
\xrightarrow{G_{\beta_1}}(x_4,x_5)\xrightarrow{G_{\beta_2}}(x_5,x_6)\xrightarrow{G_{\beta_1}}\cdots
\]
where $\{x_k\}_{k\in\N}$ is the sequence generated by~\eqref{mirainte} and
similarly for the recurrence~\eqref{miracaotica}.

For all $\alpha$ and $\beta$ the maps $F_\alpha$ and $G_\beta$  are
\textsl{integrable diffeomorphisms} from $\R^2$ to $\R^2$, which
preserve area, and the dynamics on the level sets of their
corresponding  first integrals is \textsl{translation-like}
(\cite{BR2,CGM}). In a few words this means that when the level sets
are circles it is conjugated to a rotation and when they are
diffeomorphic to the real line, it is conjugated to a translation on
this line.

The curious phenomenon that we present is that while the maps $F_{\alpha}$ and $G_{1/\alpha}$ are conjugated via the linear change
$\Psi(x,y)=(\sqrt{\alpha}x,\sqrt{\alpha}y)$ (that is $G_{1/\alpha}=\Psi^{-1}\circ F_{\alpha}\circ \Psi$), and both maps posses a first integral, the dynamics of
the compositions maps are very different: while $F_{\alpha_2,\alpha_1}$ numerically exhibits all the features of a non-integrable perturbed twist map, the map $G_{{1}/{\alpha_2},{1}/{\alpha_1}}$ has a first integral given by
$$
V(x,y)=\frac{1}{\alpha_1} x^2+ \frac{1}{\alpha_2} y^2+x^2y^2-xy,
$$
and its dynamics is \textsl{translation-like} (see Theorem \ref{rotacions}). These two different behaviors are illustrated in Figure 1 below.

\begin{center}
\includegraphics[scale=0.32]{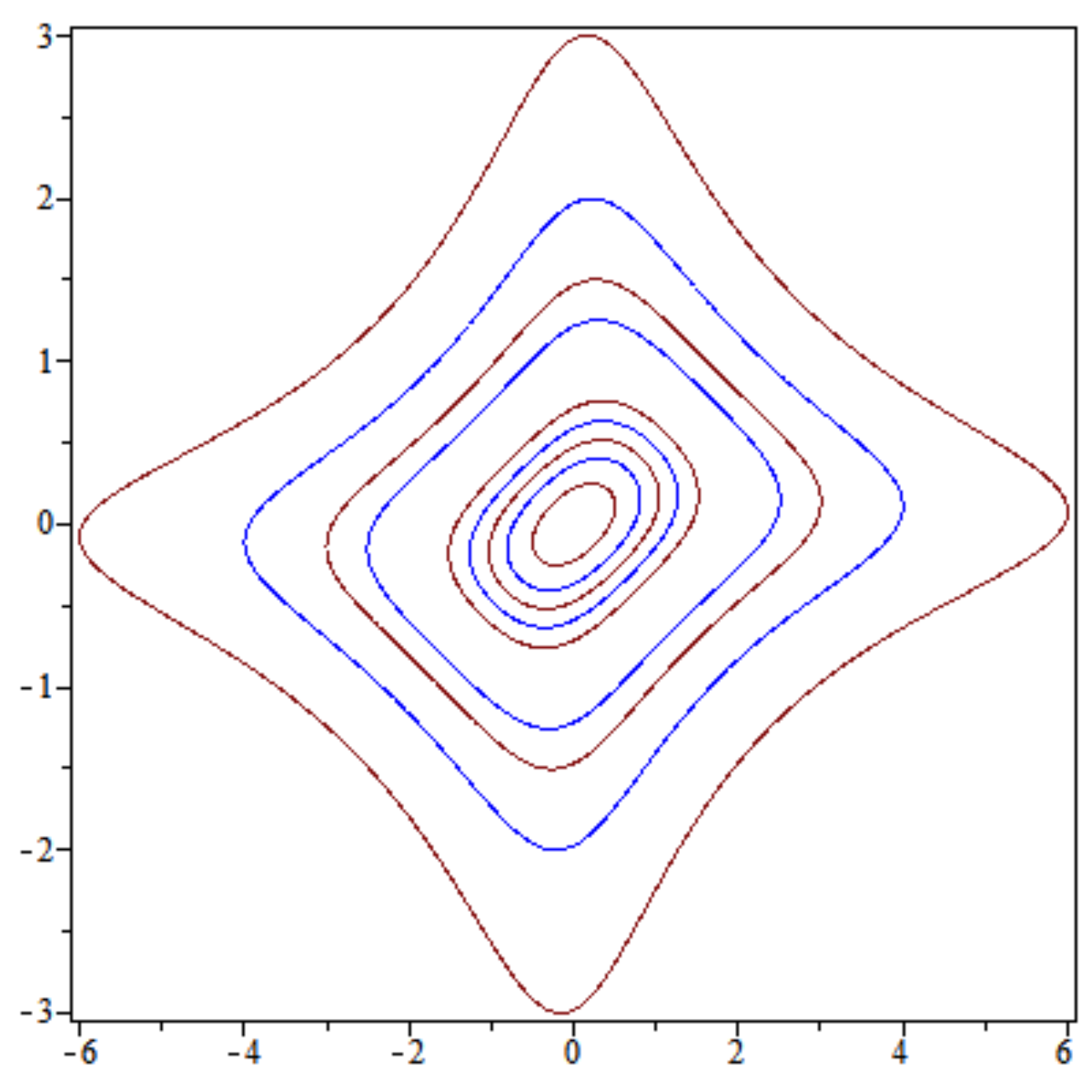}\hspace{0.5cm}
\includegraphics[scale=0.32]{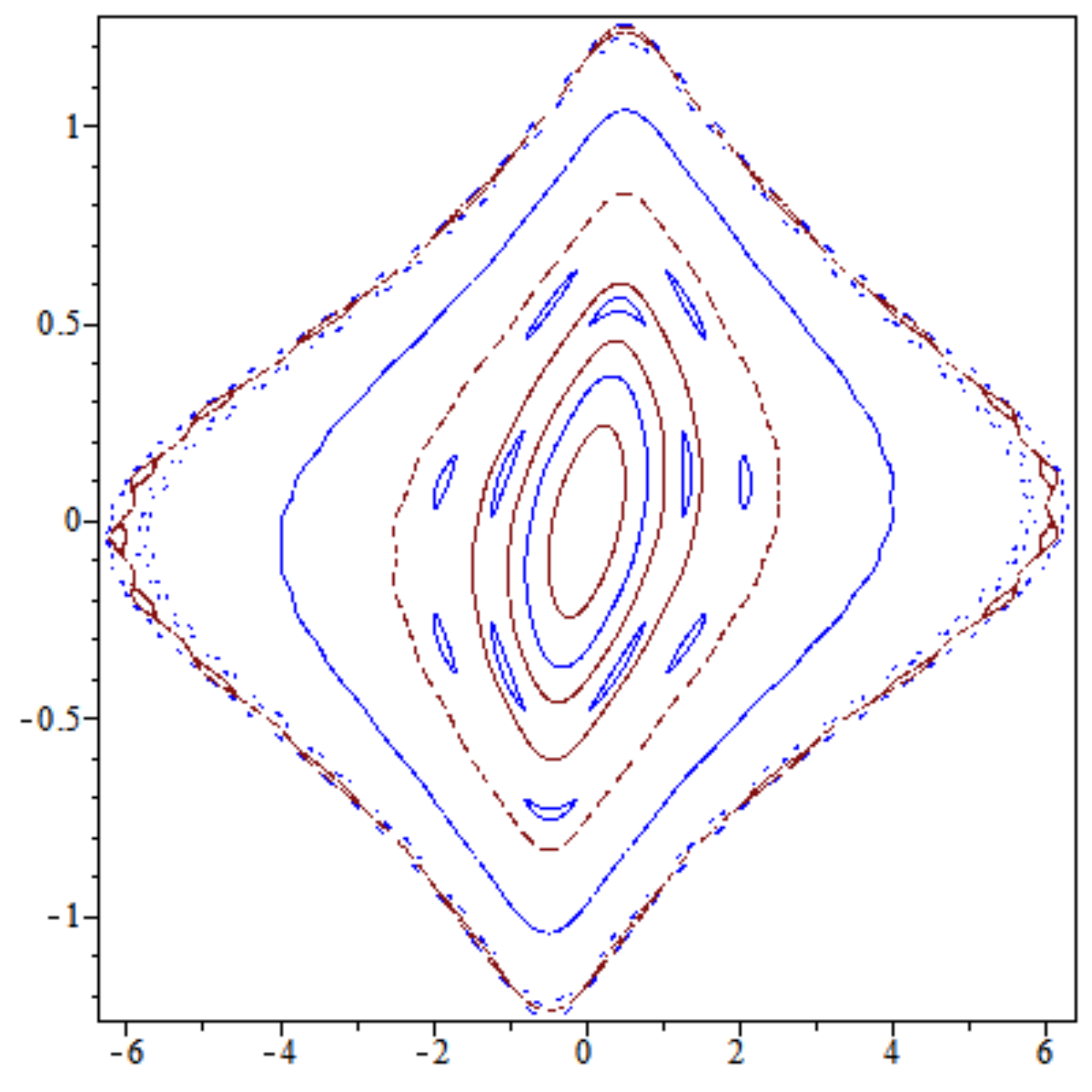}\hspace{0.5cm}
\includegraphics[scale=0.32]{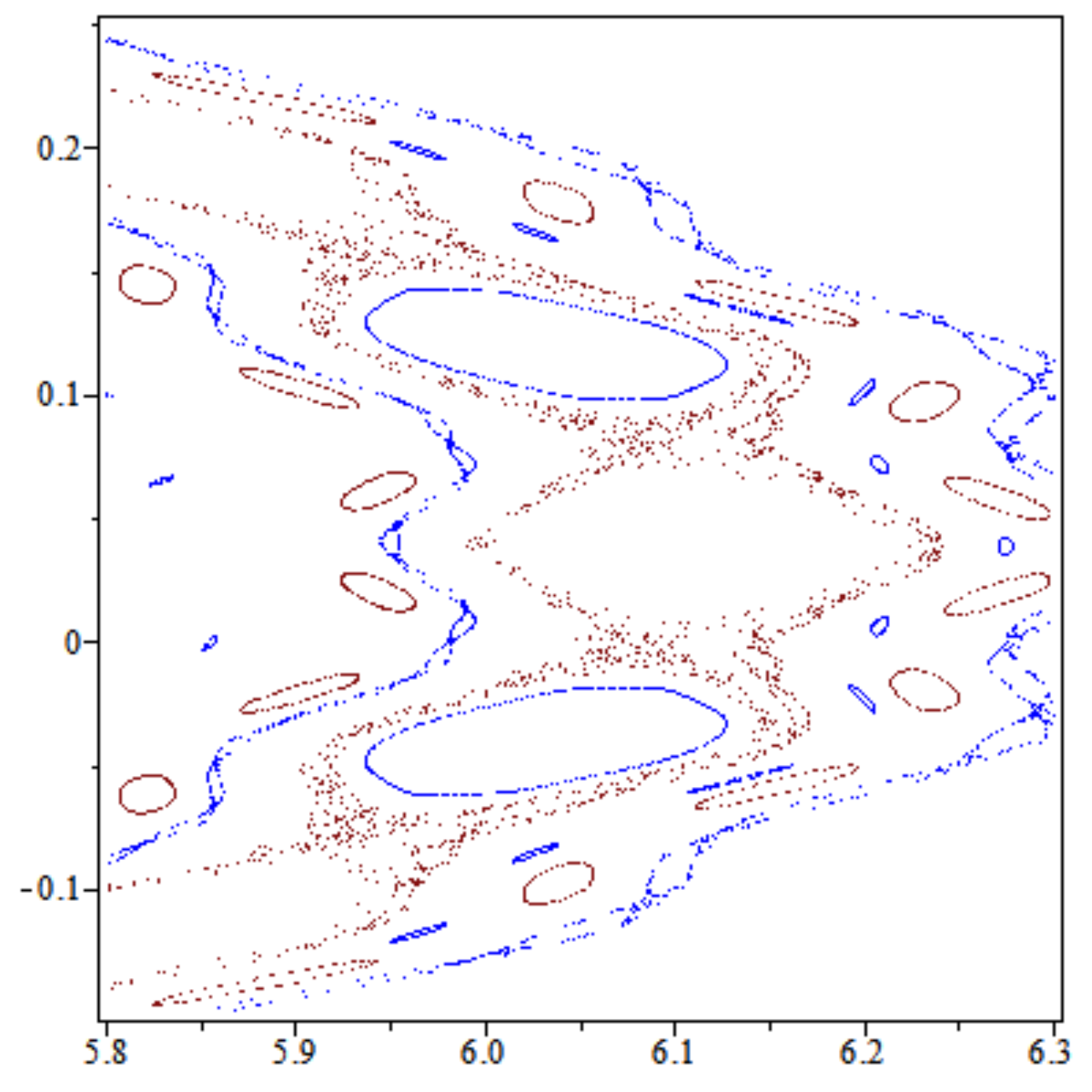}\\
\hspace{0.2cm} (a) \hspace{4.5cm} (b) \hspace{4.5cm} (c)\hspace{1cm}
\end{center}
\begin{center}
Figure 1: In (a) and (b), some orbits of the maps
$G_{{1}/{b},{1}/{a}}$ and $F_{b,a}$ with $a=2$, $b=1/2$ are
depicted. In (c), a detail of (b) is shown.
\end{center}

\section{The integrable case}\label{casintegrable}
Consider the non-autonomous recurrence
\begin{equation}\label{ab}
x_{n+2}=-x_{n}+\frac{ x_{n+1}}{\beta_n+x_{n+1}^2},
\end{equation}
with
\begin{equation*}
\beta_n\,=\,\left\{\begin{array}{lllr} a>0&{\mbox{for}}&n=2k,&\\
b>0&{\mbox{for}}&\,n=2k+1,&k\in\N,
\end{array}\right.
\end{equation*}
The local analysis of the constant recurrences generated  by it and
the level sets of its associated non-autonomous invariant $
I(x,y,n)=\beta_n x^2+ \beta_{n+1} y^2-xy+x^2y^2,$ has been already
done in \cite{CJK} (see also \cite{BR2} and \cite{GM}). In this
section we complete the description of the global dynamics of the
difference equation (\ref{ab}) and its associated composition map
$G_{b,a}$.

\subsection{Dynamics of the discrete dynamical system generated by $G_{b,a}$}\label{gba}

The above invariant for~\eqref{ab},  gives the first integral
$$V_{b,a}(x,y):=I(x,y,2)=a x^2+ b y^2-xy+x^2y^2,$$
for the the map $G_{b,a},$
\begin{equation}\label{mapgba}
G_{b,a}(x,y)=\left(-x+\frac{y}{a+y^2}, -y +\frac{(a+y^2)(y-x(a+y^2))}{b(a+y^2)^2+(y-x(a+y^2))^2}\right).
\end{equation}
In \cite{CJK}  a full description of the level sets of $V_{b,a}$ is
given, they are also described in Theorem \ref{rotacions}, bellow.
In a few words, the level sets are like (a) in Figure 1 when $ab\ge
1/4$ and like Figure 2 when $ab<1/4$.

\begin{center}
\includegraphics[scale=0.40]{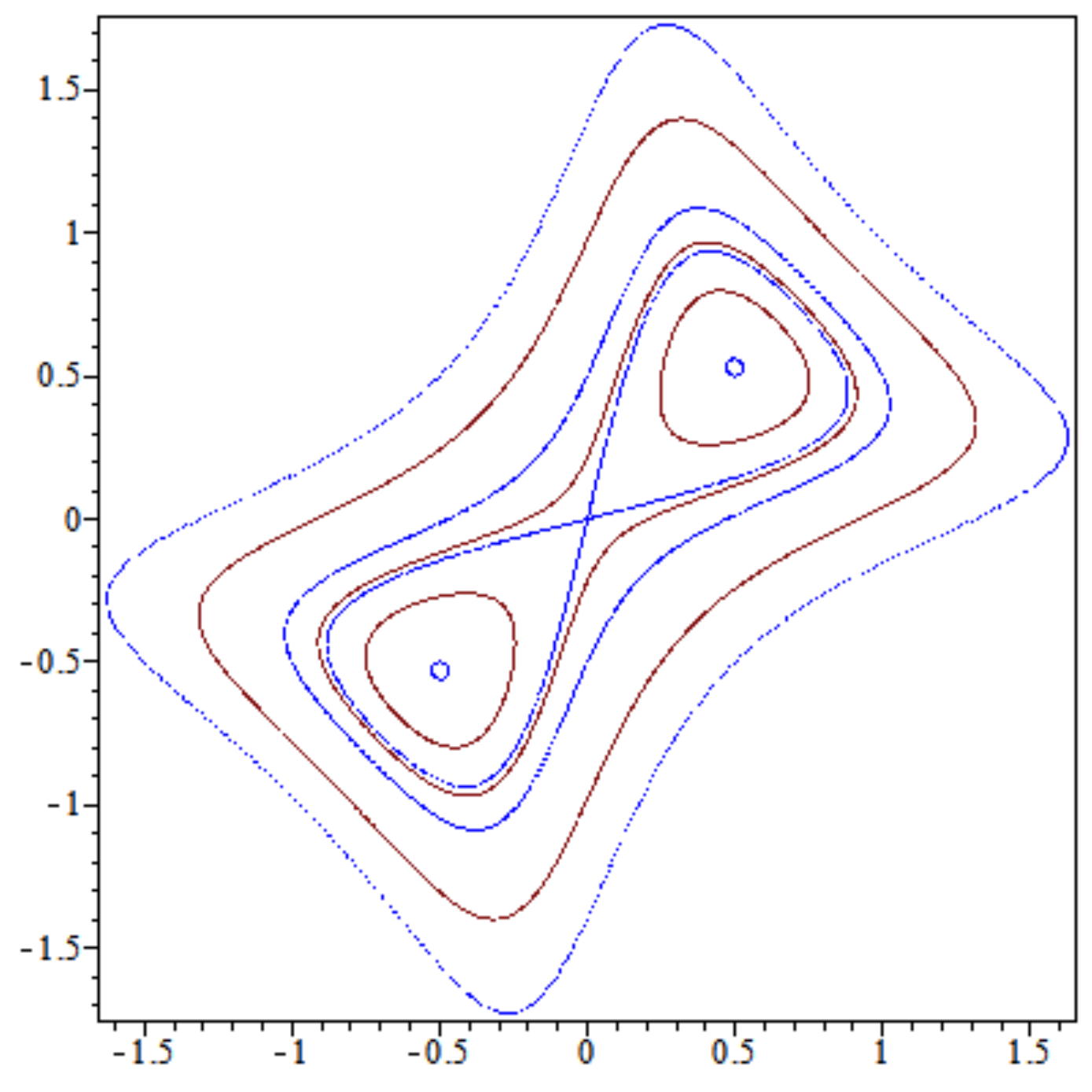}
\end{center}
\begin{center}
Figure 2: Some orbits of the maps $G_{b,a}$ with $a=1/4$, $b=2/9$.
\end{center}

To describe the dynamics on each level set we need to apply the
result bellow, which follows from \cite{CGM}.

\begin{propo}\label{propogen} Let $\phi:\U\to \U,$
$\U\subset \R^2$ be a smooth planar area preserving map having
finitely many fixed points and  a smooth first integral $V:\U\to
\R$. Then:
\begin{itemize}
\item[(a)] When a connected component of a  level set of $V$  does not contain fixed points of $\Phi,$ is invariant and
\begin{enumerate}
\item[(i)] it is diffeomorphic to a circle then $\Phi$, on it,  is conjugated to a rotation.
\item [(ii)] it  is diffeomorphic to the real line then $\Phi$, on it,  is conjugated to a translation.
\end{enumerate}
\item[(b)] When a connected component of a  level set contains fixed points of $\Phi$. Then on each invariant connected component of this level set
minus the fixed points,  $\Phi$ is conjugated to a translation.
\end{itemize}

\end{propo}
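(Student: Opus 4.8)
The plan is to deduce Proposition \ref{propogen} from the structure theory of area-preserving maps with a first integral developed in \cite{CGM}. The starting observation is that since $\Phi$ preserves area and $V$ is a first integral, every connected component $\gamma$ of a level set $\{V=c\}$ that avoids fixed points is a one-dimensional submanifold of $\U$ which is invariant under $\Phi$, and by the implicit function theorem (away from fixed points, which must include all critical points of $V$ lying on regular considerations) it is a smooth embedded curve, hence diffeomorphic either to a circle $\esferafor$... no, to $\mathbb{S}^1$ or to $\R$. So the dichotomy in (a) is just the classification of connected $1$-manifolds. On such a $\gamma$, the dynamics of $\Phi$ restricted to $\gamma$ is a homeomorphism of a $1$-manifold, and the point is to upgrade this to a \emph{conjugacy to a rigid motion} — a rotation in the circle case, a translation in the line case.

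First I would treat case (a)(i), $\gamma\cong\mathbb{S}^1$. The key is to produce a $\Phi$-invariant measure on $\gamma$ with full support and no atoms; then the classical result on circle homeomorphisms (via the rotation number and the fact that an invariant measure of full support forces conjugacy to the rigid rotation by that number) finishes it. The invariant measure comes from area-preservation: near $\gamma$ one parametrizes a neighbourhood by $(c,s)$ where $c=V$ and $s$ is a coordinate along the curves, the area form becomes $m(c,s)\,dc\wedge ds$, and since $\Phi$ fixes $c$ and preserves the area form, the one-form $m(c,s)\,ds$ restricted to $\gamma=\{c=c_0\}$ is $\Phi$-invariant; this is exactly the normalization appearing in \cite{CGM}. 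Integrating it gives the desired non-atomic full-support invariant probability measure, so $\Phi|_\gamma$ is conjugate to a rotation. For case (a)(ii), $\gamma\cong\R$, the same invariant one-form $m\,ds$ gives a $\Phi$-invariant measure; using it as the differential of a new coordinate $t$ on $\gamma$ makes $\Phi|_\gamma$ a bijection of $\R$ (or a half-line or interval) commuting with nothing a priori, but because $\Phi$ preserves the measure $dt$ and is an orientation-preserving homeomorphism of $\R$ without fixed points on $\gamma$, it must act as $t\mapsto t+\tau$ for the constant $\tau$ equal to the $dt$-measure of the fundamental domain. This is precisely the ``translation-like'' conclusion.

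For part (b), suppose a component $L$ of a level set contains finitely many fixed points of $\Phi$. Removing them leaves finitely many connected pieces, each an open arc or open circle-minus-points, hence each diffeomorphic to $\R$; the ones that are $\Phi$-invariant are handled exactly as in (a)(ii), since on them $\Phi$ is a fixed-point-free orientation-preserving homeomorphism of the line preserving the invariant measure $m\,ds$, and thus is conjugate to a translation. The finiteness of the fixed-point set is what guarantees the pieces are genuine $1$-manifolds and that the invariant measure is locally finite away from the fixed points.

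The main obstacle I expect is the careful handling of the local normal form for the area form near a regular level curve and the passage to the invariant measure — in particular checking that the measure is non-atomic with full support in the circle case (so that the rotation-number argument actually yields a conjugacy rather than merely a semiconjugacy) and that it is locally finite but of total mass giving a well-defined translation length in the line case. All of this is the content of the cited work \cite{CGM}, so in the write-up I would state the local normalization precisely, invoke \cite{CGM} for the existence and properties of the invariant one-form, and then the circle- and line-homeomorphism classifications are standard. A secondary technical point is ensuring ``finitely many fixed points'' rules out pathological level components; since $V$ is smooth and the fixed-point set is finite, each relevant level component minus fixed points is a finite disjoint union of embedded arcs, which suffices.
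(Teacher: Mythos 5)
The paper does not actually prove this proposition: it is quoted as a consequence of \cite{CGM}, and the mechanism behind that reference is exactly what the paper uses later (in Proposition \ref{limit}), namely that $\Phi$ restricted to an invariant level component is the time-$\tau$ map of the flow of the Hamiltonian-type vector field $\left(-\partial V/\partial y,\partial V/\partial x\right)$, suitably rescaled by the density of the preserved area. Your sketch is essentially a reconstruction of that same argument in measure-theoretic clothing: the invariant one-form $m(c,s)\,ds$ you build from the area form and the coordinate $c=V$ is precisely the time differential of that flow, and integrating it gives the conjugating coordinate; the circle case then follows from the classical fact that a circle homeomorphism preserving a non-atomic, full-support measure is conjugate to the rotation by the measure of $[x_0,\Phi(x_0))$, and the line case by the analogous computation (where invariance of the measure forces infinite mass at both ends, so the coordinate is onto $\R$). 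So I would call this the same route as the cited proof rather than a genuinely different one.

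Two points deserve tightening. First, your parenthetical about fixed points ``including all critical points of $V$'' is not automatic from the stated hypotheses: the tubular normalization $(c,s)=(V,s)$ requires the level component to consist of regular points of $V$, and a component free of fixed points of $\Phi$ could in principle still contain critical points of $V$ (since $V\circ\Phi=V$, $\Phi$ only permutes them). This regularity hypothesis is part of the precise statement in \cite{CGM} and should be stated explicitly (for the maps $G_{b,a}$ of the paper the critical points of $V_{b,a}$ are exactly the fixed points, so nothing is lost in the application). Second, in case (a)(ii) and in case (b) the measure is unnecessary: an invariant component diffeomorphic to $\R$ on which $\Phi$ has no fixed points carries a fixed-point-free homeomorphism of the line (orientation-reversing ones have fixed points), and any such map is conjugate to a translation by the standard fundamental-domain argument; the invariant measure is only really needed to upgrade the circle case from semiconjugacy to conjugacy.
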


Taking into account the above proposition and the topology of the
level sets of $V_{b,a}$, we obtain the characterization of the
dynamics of the composition maps on these invariant level curves.
This result improves the ones obtained in~\cite{CJK} for $ab>0$,
where only the stability problem was considered.

We will use the following notations:
$$
P_{\pm}=(x_{\pm},y_{\pm}):=\left(\pm\sqrt{-b+\frac{1}{2}\sqrt{\frac{b}{a}}},\pm\sqrt{-a+\frac{1}{2}\sqrt{\frac{a}{b}}}\right).
$$
These points correspond to the non-zero fixed points of $G_{b,a}$, that only exist for $ab<1/4$. We also set
$h_{\min}:=V_{b,a}(P_{\pm})=-\frac{1}{4}-ab+\sqrt{ab}.$

\begin{teo}\label{rotacions}
\begin{itemize}
\item[(a)] If $ab\geq 1/4$, then the origin is the unique fixed point of $G_{b,a}$ and it is of elliptic type. The sets $\{V_{b,a}(x,y)=h>0\}$ surround the origin, are diffeomorphic to circles and invariant under $G_{b,a}$. Moreover  on each of them the map is conjugated to a rotation.
\item[(b)] If $ab< 1/4$, then:
\begin{itemize}
\item[(i)] The origin is a hyperbolic saddle of $G_{b,a}$. The set $\{V_{b,a}(x,y)=0\}\setminus\{(0,0)\}$  coincides with the stable and unstable manifolds of the saddle and consists of two
curves diffeomorfic to the real line. Each of them is invariant under $G_{b,a}$ and the map is conjugated to a translation.
\item[(ii)] The sets $\{V_{b,a}(x,y)=h>0\}$ are diffeomorphic
to  circles surrounding the set $\{V_{b,a}\leq 0\}$. They are invariant under $G_{b,a}$ and on each of these sets the map is conjugated to a
rotation.
\item[(iii)] $P_{\pm}$ are fixed points of $G_{b,a}$ of elliptic type.
\item[(iv)] The sets $\{V_{b,a}(x,y)=h<0,h\neq h_{\min}\}$ are diffeomorphic
to two circles surrounding $P_{+}$ and $P_{-}$ respectively. Each of these circles is invariant under $G_{b,a}$, and the map on each of these
sets is conjugated to a rotation of the circle.
\end{itemize}
\end{itemize}
\end{teo}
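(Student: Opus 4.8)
The strategy is to combine an explicit description of the level sets of the polynomial first integral $V_{b,a}$ with the dynamical dichotomy provided by Proposition~\ref{propogen}. Thus the proof splits into two essentially independent tasks: (1) a static analysis of the algebraic curves $\{V_{b,a}=h\}$ and of the fixed points of $G_{b,a}$, and (2) the application of Proposition~\ref{propogen} to translate topological information about these curves into conjugacy statements. Since $G_{b,a}$ is area preserving (it is a composition of two area-preserving maps $G_\beta$, each of which is an area-preserving diffeomorphism of $\R^2$ as recalled in the introduction) and $V_{b,a}$ is a smooth — indeed polynomial — first integral, the hypotheses of Proposition~\ref{propogen} are met once we check that $G_{b,a}$ has finitely many fixed points; this finiteness will drop out of the fixed-point computation below.

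\medskip
\textbf{Step 1: Fixed points.} I would first solve $G_{b,a}(x,y)=(x,y)$. Writing out \eqref{mapgba}, the first component forces $2x = y/(a+y^2)$, i.e. $y = 2x(a+y^2)$; substituting into the second component (or, more symmetrically, using that a fixed point of the composition $G_{b}\circ G_a$ that is also symmetric under the natural involution must satisfy $x=-x+y/(a+y^2)$ and $y=-y+x/(b+x^2)$) leads to the system $2x(a+y^2)=y$, $2y(b+x^2)=x$. Multiplying these gives $4xy(a+y^2)(b+x^2)=xy$, so either $xy=0$ — which, combined with the two equations, forces $x=y=0$ — or $(a+y^2)(b+x^2)=1/4$. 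In the latter case one solves the two bilinear equations for $x^2,y^2$ and finds exactly the points $P_\pm=(x_\pm,y_\pm)$ written before the theorem, which are real if and only if $ab<1/4$. This shows the fixed-point set is $\{0\}$ when $ab\ge 1/4$ and $\{0,P_+,P_-\}$ when $ab<1/4$; in particular it is always finite. Then I would linearize: computing $DG_{b,a}(0,0)$ and its trace, one finds $|\mathrm{tr}|<2$ (elliptic) exactly when $ab>1/4$, $|\mathrm{tr}|=2$ at the boundary, and $|\mathrm{tr}|>2$ (hyperbolic saddle, since the map is area preserving so $\det=1$) when $ab<1/4$; and $DG_{b,a}(P_\pm)$ is elliptic throughout $ab<1/4$. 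This yields (a)'s "elliptic origin", (b)(i)'s "hyperbolic saddle", and (b)(iii). The borderline case $ab=1/4$ requires a separate argument that the origin is still (topologically) a center on the level sets, which follows once Step 2 shows the nearby level sets are invariant circles.

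\medskip
\textbf{Step 2: Topology of the level sets.} Here I would invoke the description of the level sets of $V_{b,a}$ from \cite{CJK} (the theorem explicitly says they "are also described in Theorem~\ref{rotacions}"), reconstructing it as follows. Since $V_{b,a}(x,y)=ax^2+by^2-xy+x^2y^2$ is a positive-definite quadratic form plus the nonnegative term $x^2y^2$ when $ab>1/4$, every positive level set is a compact curve encircling the origin, and (checking $\nabla V_{b,a}\ne 0$ off the fixed points) it is a smooth simple closed curve; as $h\to 0^+$ these shrink to the origin and as $h\to\infty$ they are unbounded in all directions — this gives (a) and (b)(ii). When $ab<1/4$ the quadratic part $ax^2+by^2-xy$ is indefinite, $V_{b,a}$ takes negative values, its global minimum $h_{\min}=-1/4-ab+\sqrt{ab}<0$ is attained exactly at $P_\pm$, the zero level set is a nodal cubic-type curve through the origin whose two branches are graphs asymptotic to lines — so $\{V_{b,a}=0\}\setminus\{0\}$ is two smooth curves each diffeomorphic to $\R$ — and for $h_{\min}<h<0$ the level set is two ovals around $P_+$ and $P_-$, while for $h>0$ it is again one oval around the "figure-eight region" $\{V_{b,a}\le 0\}$; this is exactly (b)(i),(ii),(iv). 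The key sub-facts to verify are that the only critical values of $V_{b,a}$ are $0$ and $h_{\min}$ (done via the fixed-point computation, since critical points of $V_{b,a}$ coincide with fixed points of $G_{b,a}$ — because $\nabla V_{b,a}=0$ characterizes the fixed points, as $G_{b,a}$ preserves $V_{b,a}$), and that each level set is connected with the stated number of components, which one sees by studying the projections onto the axes and the asymptotic behavior.

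\medskip
\textbf{Step 3: Conclusion via Proposition~\ref{propogen}.} Each connected component of a level set is $G_{b,a}$-invariant: this is because $G_{b,a}$ preserves $V_{b,a}$ and is orientation-preserving (area preserving), hence cannot swap the two ovals in case (b)(iv) nor the two branches in (b)(i) — one checks non-swapping at a single point, e.g. by noting $G_{b,a}$ maps a neighborhood of $P_+$ to itself since $P_+$ is a fixed point, and similarly the stable/unstable manifolds of the saddle at the origin are individually invariant by definition. Now apply Proposition~\ref{propogen}: on the smooth invariant circles containing no fixed point (the $h>0$ curves in all cases, and the $h<0$, $h\ne h_{\min}$ ovals) part (a)(i) gives conjugacy to a rotation; on the two components of $\{V_{b,a}=0\}\setminus\{0\}$, which are the stable and unstable manifolds of the saddle and are diffeomorphic to $\R$, part (a)(ii) — or part (b) applied to the level set $\{V_{b,a}=0\}$ with its fixed point removed — gives conjugacy to a translation. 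This assembles into all the assertions of the theorem. \textbf{Main obstacle.} The genuinely delicate point is Step 2: proving that the cubic-like curves $\{V_{b,a}=h\}$ have exactly the claimed number of connected components for every $h$ and every admissible $(a,b)$, and that no spurious critical values exist — i.e., a complete and rigorous "Reeb graph" analysis of the polynomial $V_{b,a}$. Everything else (area preservation, the fixed-point list, and the final invocation of Proposition~\ref{propogen}) is comparatively mechanical, and in fact the bulk of Step 2 can be imported from \cite{CJK}; the new content of the theorem over \cite{CJK} is precisely the rotation/translation conclusion of Step 3, which is immediate once Steps 1–2 are in place.
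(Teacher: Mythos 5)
Your proposal follows essentially the same route as the paper, which gives no separate proof of Theorem \ref{rotacions} but derives it exactly as you do: the topology of the level sets of $V_{b,a}$ is imported from \cite{CJK}, the fixed points and their linear type come from a direct computation, and the rotation/translation statements then follow from Proposition \ref{propogen}. One small slip in your Step 2: since $V_{b,a}$ is proper, for $ab<1/4$ the set $\{V_{b,a}(x,y)=0\}\setminus\{(0,0)\}$ consists of two bounded homoclinic loops forming a figure eight (as in Figure 2), not unbounded branches asymptotic to lines, although this does not affect the conclusion you use, namely that each component is diffeomorphic to $\R$, invariant, and carries translation-like dynamics.
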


In next subsection we will use the above result  to describe the behavior of the sequences $\{x_n\}_{n\in\Z}$ generated by the recurrence~(\ref{ab}).

Notice that when the   level sets of $V_{b,a}$ are diffeomorphic to circles, the above result allows to associate to each level set $\{V_{b,a}(x,y)=h\}$ the rotation number  of the map $G_{b,a}$ restricted to it. We denote it by $\rho_{b,a}(h)$ and we call the function $\rho_{b,a}$, {\it the rotation number function}. Recall that when $\rho_{b,a}(h)=p/ q\in\Q,$ with $\gcd(p,q)=1$, then $\{V_{b,a}(x,y)=h\}$ is full of $q$-periodic points of $G_{b,a}$ and when  $\rho_{b,a}(h)\not\in\Q,$ then the orbit of $G_{b,a}$ is dense on $\{V_{b,a}(x,y)=h\}$.
 Notice that  $\rho_{b,a}(h)$ is a well defined function of $h$, because when  $\{V_{b,a}(x,y)=h\}$ has two ovals, then $\rho_{b,a}$ coincides on both because of the invariance of the map
 $G_{b,a}$ by the change of variables $(x,y)\longrightarrow(-x,-y)$. Hence, to describe the periods of $G_{b,a}$ we must study the map $\rho_{b,a}(h)$.

Following the techniques introduced in  \cite{CGM-ly,CGM} and some new tools we  prove the following result:

\begin{propo}\label{limit} (i) Let $\rho_{b,a}(h)$ be
the rotation number map  associated to $G_{b,a}$ for $h>0$. Then it is analytic on $(0,+\infty)$. Moreover, $\lim_{h\to +\infty}\rho_{b,a}(h)=1/2$ and
\begin{equation*}
\lim_{h\to 0^+}\rho_{b,a}(h)=\begin{cases} 0, &\mbox{ when } \quad ab<1/4,\\
\displaystyle{\frac{1}{2\pi}\arccos\left(\frac{1}{2ab}-1\right)},&\mbox{ when } \quad ab\ge1/4.
\end{cases}
\end{equation*}

(ii) When $ab<1/4,$ $\rho_{b,a}(h)$ is also defined for
$h\in(h_{\min},0)$ it is analytic on this interval and
\begin{equation*}
\lim_{h\to
P_{\pm}}\rho_{b,a}(h)=\frac{1}{2\pi}\,\arccos(1-16\,\sqrt{ab}+32
ab).
\end{equation*}
\end{propo}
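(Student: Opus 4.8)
The plan is to reduce the computation of $\rho_{b,a}(h)$ to an elliptic-type integral and then study its limits via careful analysis of degenerating periodic orbits. The key observation is that, by Proposition~\ref{propogen}, on each oval $\gamma_h=\{V_{b,a}=h\}$ the area-preserving map $G_{b,a}$ is conjugate to a rotation, so the rotation number can be obtained as the ratio of two periods: the time needed by $G_{b,a}$ to go once around $\gamma_h$ divided by the total ``period'' of the continuous Hamiltonian-like flow whose trajectories are the level curves $\gamma_h$. More precisely, following the approach of \cite{CGM-ly,CGM}, one introduces the 1-form $\omega$ dual (via the invariant area form) to the vector field tangent to the level curves, so that $\oint_{\gamma_h}\omega$ gives a ``total angle'' $T(h)$, and one shows that the displacement along $\gamma_h$ produced by one iterate of $G_{b,a}$ equals the integral of $\omega$ along the arc joining $(x,y)$ to $G_{b,a}(x,y)$; the rotation number is then the quotient. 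Analyticity of $\rho_{b,a}$ on $(0,+\infty)$ and on $(h_{\min},0)$ follows because the curves $\gamma_h$ vary analytically with $h$ in each range where they are smooth ovals not containing fixed points (Theorem~\ref{rotacions}(a), (b)(ii), (b)(iv)), and period functions of analytic families of ovals are analytic.

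For the limit $h\to+\infty$, I would rescale: on a large level set $V_{b,a}=h$ the dominant term is $x^2y^2-xy\approx x^2y^2$, so after the scaling $x\mapsto h^{1/4}u$, $y\mapsto h^{1/4}v$ the curve tends to $\{u^2v^2=1\}$ and the map $G_{b,a}$ tends (in suitable coordinates) to the ``square-root'' of the linear map $(x,y)\mapsto(y,-x)$, i.e.\ to a quarter/half turn; a direct check that the limiting map on each branch is the involution-like map $(u,v)\mapsto(v,-u)$ restricted to the branch gives rotation number $1/2$. Equivalently, for $h$ large the recurrence behaves like $x_{n+2}\approx -x_n$, which has period $4$ as a map on pairs but rotation number $1/2$ on the invariant curve; making this asymptotic rigorous via the integral formula for $T(h)$ and the arc displacement is the routine part.

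For the limits as $h\to 0^+$ and $h\to h_{\min}$ (i.e.\ $h\to P_\pm$), the level curve degenerates onto a fixed point, so $\rho_{b,a}(h)$ tends to the rotation number of the \emph{linearization} of $G_{b,a}$ at that fixed point. The value is then $\frac{1}{2\pi}\arg\lambda$ where $\lambda=e^{i\theta}$ is the eigenvalue of $DG_{b,a}$ at the elliptic fixed point, and $\cos\theta=\tfrac12\operatorname{tr} DG_{b,a}$ since the map is area preserving (so $\det DG_{b,a}=1$ at the fixed point). Thus the proof reduces to: (1) computing $DG_{b,a}(0,0)$ and $DG_{b,a}(P_\pm)$ from \eqref{mapgba}; (2) reading off the traces; and (3) checking the trichotomy: at the origin when $ab<1/4$ the eigenvalues are real positive (hyperbolic saddle, Theorem~\ref{rotacions}(b)(i)), hence the ``rotation number'' of nearby orbits on the line-like level set is $0$; when $ab\ge 1/4$ one gets $\operatorname{tr} DG_{b,a}(0,0)=\tfrac{1}{ab}-2$, yielding $\cos\theta = \tfrac{1}{2ab}-1$ and hence $\rho=\tfrac{1}{2\pi}\arccos(\tfrac{1}{2ab}-1)$; at $P_\pm$ a similar but longer computation gives $\operatorname{tr} DG_{b,a}(P_\pm) = 2-32ab+16\sqrt{ab}$ wait — more precisely $\cos\theta = 1-16\sqrt{ab}+32ab$, giving the stated limit.

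The main obstacle I anticipate is \textbf{not} the limits themselves (those are linearization computations, tedious but mechanical) but rather \emph{justifying the continuity of $\rho_{b,a}$ up to the boundary}, i.e.\ showing that the rotation number of $G_{b,a}|_{\gamma_h}$ genuinely converges to the linearized rotation number as $\gamma_h$ shrinks to an elliptic fixed point, and that it converges to $1/2$ as $\gamma_h$ escapes to infinity. This requires uniform control of the arc-length displacement integral and of the total-period integral $T(h)$ near the degenerate values of $h$; for the $h\to 0^+$ (resp. $h\to h_{\min}$) case one typically uses Morse-lemma normal coordinates at the fixed point so that both $\gamma_h$ and the action of $G_{b,a}$ are, to leading order, those of the linear elliptic map, and a dominated-convergence argument then transfers the limit. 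For $h\to+\infty$ the care is in controlling the behavior of $G_{b,a}$ on the four unbounded branches of $\gamma_h$ after rescaling, ensuring the displacement along the connecting arcs contributes exactly half a turn in the limit. Once these convergence statements are in place, the explicit formulas follow by plugging the fixed points into $DG_{b,a}$ and simplifying.
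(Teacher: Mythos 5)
Your overall framework (writing $\rho_{b,a}(h)$ as a quotient $\tau(h)/T(h)$ coming from the flow of the Hamiltonian vector field of $V_{b,a}$, analyticity from analytic dependence, and linearization at the elliptic fixed points for the degeneration limits) is essentially the paper's, and your trace computations at $(0,0)$ for $ab>1/4$ and at $P_{\pm}$ give the right values. However, there is a genuine gap in your treatment of $\lim_{h\to 0^{+}}\rho_{b,a}(h)=0$ when $ab<1/4$. In that regime the ovals $\{V_{b,a}=h\}$ with $h>0$ small do \emph{not} shrink to a fixed point: by Theorem~\ref{rotacions}(b) they surround the whole set $\{V_{b,a}\le 0\}$ and collapse, as $h\to 0^{+}$, onto the polycycle formed by the saddle at the origin together with its two homoclinic loops. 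So your guiding principle ``the level curve degenerates onto a fixed point, hence $\rho$ tends to the rotation number of the linearization there'' does not apply, and the justification you offer (``the eigenvalues at the origin are real positive, hence the rotation number of nearby orbits is $0$'') is not an argument: at a saddle $\operatorname{tr}DG_{b,a}(0,0)>2$ and the formula $\frac{1}{2\pi}\arccos\left(\frac{1}{2}\operatorname{tr}\right)$ is meaningless, while on the nearby ovals the map genuinely is conjugate to a rotation with $\rho_{b,a}(h)>0$, and what has to be proved is precisely that this number tends to $0$. The Morse-lemma/dominated-convergence scheme you propose for the elliptic degenerations cannot be transferred to a polycycle limit.

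The mechanism forcing the limit $0$ is that the period $T(h)$ of the level ovals blows up as they approach the homoclinic connection (the time spent near the saddle is unbounded) while the shift time $\tau(h)$ stays bounded; the paper implements this without any integral estimate: on $\{V_{b,a}=0\}\setminus\{(0,0)\}$ the map is conjugate to a translation and $G_{b,a}^{n}(x_1,x_2)\to(0,0)$ as $n\to\pm\infty$, so for any $N$ the points $G_{b,a}^{-N}(x_1,x_2)$ and $G_{b,a}^{N}(x_1,x_2)$ lie on the same loop, and by continuity of $G_{b,a}^{2N}$ a point of a nearby oval $\{V_{b,a}=h\}$, $h>0$ small, advances less than half a turn in $2N$ iterates, whence $\rho_{b,a}(h)<1/(2N)$. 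Some argument of this kind (or an explicit proof that $T(h)\to\infty$ while $\tau(h)$ remains bounded) is needed to close your proof; note also that the paper obtains the case $ab=1/4$ by continuity between the two regimes, and handles $h\to+\infty$ not by your rescaling but by the inversion $(z,w)=(1/x,1/y)$, which turns infinity into a fixed point of a new map with differential $-\mathrm{Id}$ at the origin, so that the elliptic-fixed-point limit argument applies verbatim --- a cleaner way to make your ``$x_{n+2}\approx -x_{n}$'' heuristic rigorous.
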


\begin{proof}  (i) Following \cite[Thm 1]{CGM} under our hypotheses the rotation number function on each level set admits a dynamical interpretation in terms of the time of the flow associated to the planar vector field
$$
X_{b,a}(x,y)=\dps{\left(-\frac{\partial V_{b,a}}{\partial y},\frac{\partial V_{b,a}}{\partial x}\right)}=(x-2by-2x^2y,2ax-y+2xy^2).
$$
More concretely, each closed curve $\{V_{b,a}(x,y)=h\}$ is a periodic orbit of period $T(h)$ of the differential equation associated to $X_{b,a}$, and
moreover $G_{b,a}(x,y)=\varphi(\tau(h);x,y)$, where $\varphi$ is the flow associated to $X_{b,a}$. Then  $\rho_{b,a}(h)=\tau(h)/T(h)$. From this expression it is not difficult to see that it is an analytic function for $h\in(0,\infty)$, see \cite{CGM-ly}.

A straightforward computation shows that when $ab>1/4$
$$\mathrm{Spec}(DG_{b,a}(0,0))=\left\{\lambda^+,\lambda^- \right\},\quad \mbox{where}
\quad\lambda^\pm=\frac{1-2ab\pm i\,\sqrt{4ab-1}}{2ab}. $$ By using the tools
developed to prove \cite[Prop. 19]{CGM-ly}  we get that
$$\lim\limits_{h\to 0^+}\rho_{b,a}(h)=
\displaystyle{\frac{1}{2\pi}\arccos\left(\displaystyle{\mathrm{Re}(\lambda)}\right)}=\displaystyle{\frac{1}{2\pi}
\arccos\left(\frac{1}{2ab}-1\right)},$$
as we wanted to show.

To study the case $ab<1/4$ we will use another approach. Notice that in this situation
the level set ${\mathcal V}_0:=\{V_{b,a}(x,y)=0\}$, is formed by the origin, which is a saddle point, and their stable and unstable manifolds, which coincide, forming two loops, see Figure 2. Moreover on each of these loops the map $G_{b,a}$ is conjugated to a translation and
\[
\lim_{n\to\pm \infty} G^n_{b,a}(x_1,x_2)=(0,0),
\]
for each $(x_1,x_2)\in {\mathcal V}_0\setminus\{(0,0)\}$. Given any $\varepsilon>0,$ we take $N$ such that $1/(2N)<\varepsilon$
and the points $P^{\pm}_n:= G^{\pm n}_{b,a}(x_1,x_2)$. Observe that  both are in the same connected component of
 $\mathcal{V}_0\setminus\{(0,0)\}.$ Note that  $G^{2N}_{b,a}(P^-_n)=P^+_n$. By continuity of $G^{2N}_{b,a}$ we known
 that the image for $G^{2N}_{b,a}$ of a point, near $P_n^-$, on  $\{V_{b,a}(x,y)=h>0\}$ is, for $h>0$ small enough,
  as close as we want to $P_n^+.$ This implies that after $2N$ iterates this point has given less that half a turn to $\{V_{b,a}(x,y)=h>0\}$.
 Consequently, for  $h$ small enough, $\rho_{b,a}(h)<1/(2N)<\varepsilon$, as we wanted to prove.

The case $ab=1/4$ follows from the above two results, again by
continuity arguments.

Finally, to compute $\lim\limits_{h\to +\infty} \rho_{b,a}(h)$ we consider the change of
variables given by $(z,w)=(1/x,1/y)$ which conjugates the behavior at infinity of
$G_{b,a}$ with the behavior at the origin of the new map:
$$\widetilde{G}(z,w)=\left(\displaystyle{ -{\frac {z \left(
1+a{w}^{2}\right) }{1-zw+a{w}^{2}}}},\displaystyle{-\frac{p(z,w)\,w}{q(z,w)}}\right),$$
where
$$\begin{array}{l}
p(z,w)=1+2a{w}^{2}-2zw+b{z}^{2}+{a}^{2}{w}^{4}-2az{w}^{3}+2ab{z}^{2}{
w}^{2}+{z}^{2}{w}^{2}+{a}^{2}b{z}^{2}{w}^{4}, \\
q(z,w)=1+2a{w}^{2}-zw+b{z}^{2}+{a}^{2}{w}^{4}+2ab{z}^{2}{w}^{2}+
{a}^{2}z{ w}^{5}+{a}^{2}b{z}^{2}{w}^{4}-a{z}^{2}{w}^{4}.
\end{array}$$
Clearly  $\widetilde{G}(z,w)=(-z+O_2(z,w),-w+O_2(z,w))$ and so  $D\widetilde{G}(0,0)$ has the
eigenvalues $\pm1=\pm e^{\pi i}.$ Hence arguing as in the study of $G_{b,a}$ near
$(0,0)$ we obtain that
$$\lim\limits_{h\to \infty}\rho_{b,a}(h)=
\displaystyle{\frac{\pi}{2\pi}}=\displaystyle{\frac{1}{2}},$$
as we wanted to prove.

(ii) Follows by using similar arguments.
\end{proof}

\begin{nota}\label{noconstant}
(i) Applying the  method developed in the previous proposition to study $G_{b}$ we
would obtain $\lim\limits_{h\to+\infty}\rho_b(h)=1/4$. This  gives a new proof of
this fact which is essentially different of the one given in  \cite{BR2}.

(ii) It holds that
$$\displaystyle{\frac{1}{2\pi}\arccos\left(\frac{1}{2ab}-1\right)}
<\displaystyle{\frac{1}{2\pi}\arccos\left(-1\right)}=\displaystyle{\frac{1}{2}}.$$
Hence the map $\rho_{b,a}(h)$ can not be a constant function. In particular this
implies that there are no positive values $a$ and $b$ for which $G_b$ or $G_{b,a}$
are globally periodic.
\end{nota}

Before stating our main result on the periods appearing for $G_{b,a},$ we study in detail the 2-periodic points.

\begin{lem}\label{2p} For $a>0$ and $b>0$ the map $G_{b,a}$ has 2-periodic points if and only if $0<ab<1/16.$ Moreover they are given by the two ovals $\{V_{b,a}(x,y)=-ab\}$.
\end{lem}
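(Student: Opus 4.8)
The plan is to compute directly the set of 2-periodic points of $G_{b,a}$ by combining two pieces of information: the value of the first integral $V_{b,a}$ on such points, and an explicit condition coming from the equation $G_{b,a}^2=\mathrm{Id}$. First I would observe that a 2-periodic point $(x,y)$ of $G_{b,a}$ corresponds, via the unfolding, to a sequence $\{x_n\}$ generated by the recurrence~\eqref{ab} which is 2-periodic, i.e. $x_{n+2}=x_n$ for all $n$; writing out~\eqref{ab} with $x_{n+2}=x_n$ gives $2x_n(\beta_n+x_{n+1}^2)=x_{n+1}$, so distinguishing $n$ even ($\beta_n=a$) and $n$ odd ($\beta_n=b$) yields the two scalar equations
\begin{equation*}
2x(a+y^2)=y,\qquad 2y(b+x^2)=x,
\end{equation*}
where I abbreviate $x=x_{2k}$, $y=x_{2k+1}$.

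Next I would analyze this system. Multiplying the two equations gives $4xy(a+y^2)(b+x^2)=xy$. Either $xy=0$, which quickly forces $x=y=0$ (the origin, which is a fixed point, hence a degenerate 2-periodic point we may exclude or note separately), or $4(a+y^2)(b+x^2)=1$. In the latter case I can also extract $y/x=2(a+y^2)$ and $x/y=2(b+x^2)$, so $(a+y^2)(b+x^2)=\frac14\cdot\frac{y}{x}\cdot\frac{x}{y}=\frac14$ is consistent, and moreover $\frac{y^2}{x^2}=\frac{a+y^2}{b+x^2}$, i.e. $y^2(b+x^2)=x^2(a+y^2)$, giving $by^2=ax^2$, so $y^2=\frac{a}{b}x^2$. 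Substituting into $4(a+y^2)(b+x^2)=1$ produces a single equation in $x^2$; solving it and checking positivity of the resulting $x^2$ (and of $y^2$) is where the constraint $0<ab<1/16$ should emerge. I expect this to be the main computational step, and the main obstacle is simply keeping the algebra clean and correctly tracking which sign choices and which of the (at most two) roots for $x^2$ give genuine real solutions; the threshold $ab<1/16$ should appear as the condition for the relevant root to be positive.

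Finally I would identify the level set on which these points lie. Evaluating $V_{b,a}(x,y)=ax^2+by^2-xy+x^2y^2$ at a 2-periodic point, using $by^2=ax^2$ (so $ax^2+by^2=2ax^2$) and $2x(a+y^2)=y$ (so $xy=2x^2(a+y^2)=2ax^2+2x^2y^2$), gives $V_{b,a}=2ax^2+x^2y^2-(2ax^2+2x^2y^2)=-x^2y^2$; then using $4(a+y^2)(b+x^2)=1$ together with $y=2x(a+y^2)$ and $x=2y(b+x^2)$ we get $xy=4xy(a+y^2)(b+x^2)$ trivially, but more usefully $x^2y^2 = \big(2x(a+y^2)\big)\big(2y(b+x^2)\big)\cdot xy/(4(a+y^2)(b+x^2)\cdot xy)$—so I would instead compute $x^2y^2$ directly from $x^2 = $ (the root found above) and $y^2=\frac{a}{b}x^2$, and verify it equals $ab$, whence $V_{b,a}=-ab$ on these points. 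Conversely, Theorem~\ref{rotacions}(b)(iv) tells us $\{V_{b,a}=-ab\}$ (which is an admissible level since $-\tfrac14-ab+\sqrt{ab}<-ab$ reduces to $\sqrt{ab}<\tfrac14$, i.e. $ab<1/16$, and $-ab<0$) consists of two ovals surrounding $P_\pm$, each invariant with $G_{b,a}$ conjugate to a rotation; the explicit 2-periodic points found above furnish a 2-periodic orbit on these ovals, so the rotation number there is exactly $1/2$ and hence the entire level set $\{V_{b,a}=-ab\}$ is filled with 2-periodic points. This closes the argument: 2-periodic points exist iff $0<ab<1/16$, and they are precisely the two ovals $\{V_{b,a}(x,y)=-ab\}$.
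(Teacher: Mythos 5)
There is a genuine gap, and it occurs at the very first step: you identify the $2$-periodic points of the composition map $G_{b,a}$ with the $2$-periodic sequences of the recurrence~\eqref{ab}, i.e.\ with $x_{n+2}=x_n$. This is not the right correspondence. Since $G_{b,a}(x_{2n-1},x_{2n})=(x_{2n+1},x_{2n+2})$, the map advances the sequence by \emph{two} steps, so a sequence with $x_{n+2}=x_n$ gives $(x_3,x_4)=(x_1,x_2)$, i.e.\ a \emph{fixed} point of $G_{b,a}$; genuine $2$-periodic points of $G_{b,a}$ correspond to $4$-periodic sequences, and one must solve $G_{b,a}^2=\mathrm{Id}$ (the paper does this via the equivalent equation $G_{b,a}=G_{b,a}^{-1}$, which factors as $(2x^2y-x+2by)R=0$, $(2xy^2+2ax-y)R=0$ with $R=V_{b,a}+ab$, and then studies when $\{R=0\}$ has real points).

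Concretely, your system $2x(a+y^2)=y$, $2y(b+x^2)=x$ is exactly the fixed-point system of $G_{b,a}$, and carrying out the computation you sketch does not produce the claimed conclusions: from $by^2=ax^2$ and $4(a+y^2)(b+x^2)=1$ one gets $x^2=-b+\frac12\sqrt{b/a}$ and $y^2=-a+\frac12\sqrt{a/b}$, which are positive precisely when $ab<1/4$ (not $1/16$), and these are the fixed points $P_{\pm}$. Moreover $x^2y^2=ab-\sqrt{ab}+\frac14$, which equals $ab$ only when $ab=1/16$; so your own formula $V_{b,a}=-x^2y^2$ lands on the level $h_{\min}=-\frac14-ab+\sqrt{ab}$, not on $\{V_{b,a}=-ab\}$. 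Thus the threshold $1/16$ and the level $-ab$ cannot emerge from this reduction, and the ``only if'' direction (no $2$-periodic points off $\{V_{b,a}=-ab\}$) is not addressed at all, since the equations being solved are not the $2$-periodicity equations. The final step of your argument (an invariant oval carrying one $2$-periodic orbit has rotation number $1/2$ and hence consists entirely of $2$-periodic points, by Theorem~\ref{rotacions}) is sound, but it needs a genuine $2$-periodic point as input, which the proposal does not supply.
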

\begin{proof} We have to solve the system $G_{b,a}(G_{b,a}(x,y))=(x,y)$, which is equivalent to the new one
$
G_{b,a}(x,y)=G_{b,a}^{-1}(x,y).
$ After some manipulations it gives
$$
\left\{
\begin{array}{l}
(2{x}^{2}y-x+2by)\, R(x,y)=0, \\
(2x{y}^{2}+2ax-y)\,R(x,y)=0,
\end{array}
\right.
$$
where $R(x,y):={x}^{2}{y}^{2}+a{x}^{2}-xy+b{y}^{2}+ab=V_{b,a}(x,y)+ab$.
Its solutions have to contain also the fixed points of the map. It is not difficult to check that the fixed points are  the real solutions of the system $\{2{x}^{2}y-x+2by=0,\,2x{y}^{2}+2ax-y=0 \}$. Hence the 2-periodic points coincide with the set $\mathcal{R}:=\{R(x,y)=0\}$. Therefore we need to study when it has real points. Notice $R$ is a quadratic polynomial with respect to $x$. Its discriminant is
$$D(y):=-4b{y}^{4}+ \left(1 -8ab\right){y}^{2}-4b{a}^{2}.$$
Since $a>0$ and $b>0$ then $\mathcal R$ will be non-empty when $D(y)$ takes non-negative values. The discriminant of $D(y)$, with respect to $y^2$, is $16ab-1>0$. So when $0<ab\le 1/16$, the set $\mathcal R$ is non empty. When $ab=1/16$ it coincides with the other fixed points $P_\pm.$ For $ab<1/16$
it is formed by two ovals surrounding them, which form precisely the level set with rotation number $1/2$, that is $\rho_{b,a}(-ab)=1/2$. Hence the result follows.
\end{proof}

Recall that the origin is always a fixed point of the map  $G_{b,a}$ and that when
$ab<1/4$ it has two more fixed points. Next result gives the periodic points of
this map with period greater than one.

\begin{teo}\label{peri} Let $G_{b,a}$ the map defined in \eqref{mapgba} for $a>0$ and $b>0$.
\begin{enumerate}[(a)]
\item For $0<ab<1/16$ it has  continua of periodic points of all the
periods greater than one.
\item For $1/16\le ab\le1/4$ it has  continua of periodic points of all
the periods greater than two and has no periodic points of period two.
\item For $ab> 1/4$ it has  continua of periodic points of all the
periods $q$ such that there exists $p\in\N$, with $\gcd(p,q)=1$ and such that
\[
\frac pq
\in\left(\frac{1}{2\pi}\arccos\left(\frac{1}{2ab}-1\right),\dfrac 12
\right).
\]
In particular, there is a computable number $q_0(a,b)$ such that $G_{b,a}$ has continua of periodic points of all periods greater than $q_0(a,b)$.
\end{enumerate}
\end{teo}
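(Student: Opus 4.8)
The plan is to derive the periods of $G_{b,a}$ from the rotation number function $\rho_{b,a}$ of Propositions~\ref{propogen}--\ref{limit} together with Lemma~\ref{2p}, using essentially only the intermediate value theorem. The starting observation is that, by Theorem~\ref{rotacions}, every periodic point of $G_{b,a}$ of period greater than one lies on a level set $\{V_{b,a}=h\}$ diffeomorphic to a circle: the remaining points are the fixed points $(0,0)$ and, when $ab<1/4$, $P_\pm$, and the two line-like components of $\{V_{b,a}=0\}\setminus\{(0,0)\}$, on which Proposition~\ref{propogen} gives translation dynamics and hence no periodic orbits. On a circle level set with $\rho_{b,a}(h)=p/q$ in lowest terms, the whole circle --- a continuum --- consists of $q$-periodic points. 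So the problem reduces to identifying which reduced fractions $p/q$ lie in the image of $\rho_{b,a}$.

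By Proposition~\ref{limit}(i), $\rho_{b,a}$ is continuous on $(0,+\infty)$, with $\lim_{h\to+\infty}\rho_{b,a}(h)=\tfrac12$ and $\lim_{h\to0^+}\rho_{b,a}(h)=L_0$, where $L_0=0$ when $ab\le\tfrac14$ (for $ab=\tfrac14$ use $\arccos 1=0$) and $L_0=\tfrac{1}{2\pi}\arccos\!\left(\tfrac{1}{2ab}-1\right)\in(0,\tfrac12)$ when $ab>\tfrac14$. Since $(0,+\infty)$ is connected, the image of $\rho_{b,a}$ is an interval, and because it contains points arbitrarily close to $L_0$ and to $\tfrac12$ it contains the entire open interval $(L_0,\tfrac12)$. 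Hence every reduced fraction $p/q\in(L_0,\tfrac12)$ is a value of $\rho_{b,a}$, and the corresponding circle $\{V_{b,a}=h>0\}$ is a continuum of $q$-periodic points.

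This already yields the three cases. If $ab>\tfrac14$, the circles $\{V_{b,a}=h>0\}$ are, by Theorem~\ref{rotacions}(a), the only level sets carrying dynamics, and we obtain continua of $q$-periodic points for every $q$ admitting a reduced $p/q\in(L_0,\tfrac12)$: this is part~(c). If $ab\le\tfrac14$, then $L_0=0$, so $(0,\tfrac12)$ lies in the image and, taking $p=1$, we get continua of $q$-periodic points for every integer $q\ge3$ (the $h<0$ family of ovals around $P_\pm$, when present, only adds periods already obtained, and by Lemma~\ref{2p} contributes no $2$-periodic points once $ab\ge\tfrac1{16}$). Period two is then settled entirely by Lemma~\ref{2p}: $G_{b,a}$ has (a continuum of) $2$-periodic points if and only if $0<ab<\tfrac1{16}$. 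Combining, part~(a) follows for $0<ab<\tfrac1{16}$ (all periods greater than one) and part~(b) for $\tfrac1{16}\le ab\le\tfrac14$ (all periods greater than two, none of period two).

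For the final assertion of (c), note that $\ell:=\tfrac12-L_0>0$ is fixed once $a,b$ are. The claim amounts to saying that for every $q$ beyond some explicit threshold $q_0(a,b)$ the interval $(L_0,\tfrac12)$ contains a reduced fraction with denominator \emph{exactly} $q$, i.e.\ an integer $p\in(qL_0,q/2)$ with $\gcd(p,q)=1$; by inclusion--exclusion the number of such $p$ is $\varphi(q)\,\ell+O(2^{\omega(q)})$, which is positive once $q$ is large enough in terms of $a,b$. For such $q$ the previous paragraphs produce a continuum of $q$-periodic points. The reduction to $\rho_{b,a}$ and the intermediate value argument are immediate from the earlier results; the one point requiring care --- and the place where an effective $q_0(a,b)$ comes from --- is this last number-theoretic estimate, where one must dominate the inclusion--exclusion error $2^{\omega(q)}$ by $\varphi(q)$.
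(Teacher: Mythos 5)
Your proof is correct and follows essentially the same route as the paper: the theorem is deduced by combining Theorem~\ref{rotacions}, Proposition~\ref{limit} and Lemma~\ref{2p}, using continuity of $\rho_{b,a}$ to realize every reduced fraction $p/q$ in the relevant interval of rotation numbers (with $1/q\in(0,1/2)$ giving all periods $q\ge 3$ when $ab\le 1/4$, and Lemma~\ref{2p} settling period two). The only difference is cosmetic: for the last claim of (c) the paper just cites the well-known fact that an open interval contains irreducible fractions of every sufficiently large denominator (with a constructive bound in \cite{CGM-ly}), whereas you sketch its standard inclusion--exclusion proof.
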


\begin{proof}
It is a direct consequence of Theorem~\ref{rotacions},  Proposition~\ref{limit} and Lemma~\ref{2p}. Notice that the fact that all periods greater than 2 appear when $ab\le 1/4$ is because $q\ge 3,$ $1/q\in(0,1/2)$.
The final conclusion in (c) follows from the well-known fact that any open interval contains irreducible fractions with all the denominators
bigger that a given number which depends on their extremes. A constructive upper bound for this number is developed  in \cite{CGM-ly}.
\end{proof}

\begin{nota} (a) If   the rotation number function $\rho_{b,a}(h)$ was monotonous with respect to $h$ then we could also completely characterize  all the periods for the case $ab>1/4$. They would be precisely the ones given in item (c).

(b) Theorem~\ref{peri}  can be
easily adapted to obtain similar results for the  Gumov\-ski-Mira maps,
$G_b, b>0$. The corresponding results complete the ones of \cite{BR2} obtained for
the case $b\ge1/2$, where all the level sets of the corresponding first integral,
but the origin, are diffeomorphic to circles.
\end{nota}

\subsection{Study of the recurrence~(\ref{ab})}\label{recu-g}

Theorem \ref{rotacions} will allow us to characterize the sequences
$\{x_n\}_{n\in\N}$ generated by the recurrence~(\ref{ab}). Observe
that in fact since the map $G_{b,a}$ is a diffeomorphism of $\R^2$,
by using its inverse we can define the extended sequence
$\{x_n\}_{n\in\Z}$. Recall that it is said that a recurrence is {\sl
persistent} if any sequence $\{x_n\}_{n\in\Z}$ is contained in a
compact set or $\R$, which depends on the initial conditions $x_1$
and $x_2$.

Clearly for any $a>0$, $b>0$ and $x_1=x_2=0$ we get that $x_n=0$ for all $n\in\Z$. Next result describes the behaviour of  $\{x_n\}_{n\in\Z}$
for non-null initial conditions.

\begin{teo}\label{main} The recurrence (\ref{ab}) is persistent. Furthermore, let $\{x_n\}_{n\in\Z}$ be any sequence defined by (\ref{ab}) with initial conditions $x_1$
and $x_2$, $(x_1,x_2)\ne(0,0)$ and set
$$h_{\min}:=-\frac{1}{4}-ab+\sqrt{ab}\,\mbox{ and }\,
h_{+}:=\frac{1}{2}\left[-({a}^{2}+{b}^{2})-\frac{1}{2}+ \,\left( a+b \right) \,\sqrt { (a-b)^{2}+1 }\right].
$$ It holds that  $h_{\min}<h_+<0$.

\begin{itemize}

\item[(a)] Assume that $ab< 1/4$. Then:
\begin{itemize}
\item[(i)] For $(x_1,x_2)=P_{\pm}$, which coincide with the set  $\{V_{b,a}(x,y)=h_{\min}\}$, the
sequence $\{x_n\}_{n\in\Z}$ is two periodic when $a\ne b$ and a constant sequence when $a=b$.
\item[(ii)] For $(x_1,x_2)\neq (0,0)$ and $V_{b,a}(x_1,x_2)=0$, $\lim\limits_{n\to \pm\infty} x_n=0$.
\item[(iii)]  For a numerable and dense set of values of $h$, the level sets  $\{V_{b,a}(x,y)=h\}$ give initial conditions such that $\{x_n\}_{n\in\Z}$ is
a periodic sequence of even period which only depends on $h$.
\item[(iv)]  For the rest of values of $h$ (which  form a dense set of full measure),
the level sets  $\{V_{b,a}(x,y)=h\}$ give initial conditions such
that the adherence of the corresponding sequence $\{x_n\}_{n\in\Z}$
is given by one or two intervals. More concretely, this adherence is
given by one interval when either $a=b$; $h>0$; or $h_+\le h <0$ and
by two intervals when $a\neq b$ and $h_{\min}<h<h_+$.
\end{itemize}
\item[(b)] Assume that $ab\ge 1/4$. Then only the behaviors given in items (iii) and (iv) appear. Moreover in item (iv) the adherence is always
formed by a single interval.

\end{itemize}
\end{teo}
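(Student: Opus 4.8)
The plan is to transfer everything to the dynamics of the area--preserving diffeomorphism $G_{b,a}$, for which Theorem~\ref{rotacions}, Proposition~\ref{limit} and Lemma~\ref{2p} already give the complete picture. A one--line computation using $x_{n+2}+x_n=x_{n+1}/(\beta_n+x_{n+1}^2)$ shows that $I(x_n,x_{n+1},n)$ is in fact independent of $n$, so, writing $h:=V_{b,a}(x_1,x_2)$, the pairs $(x_{2k-1},x_{2k})$, $k\in\Z$, all lie on the level set $\{V_{b,a}=h\}$ and satisfy $G_{b,a}(x_{2k-1},x_{2k})=(x_{2k+1},x_{2k+2})$; thus they form the $G_{b,a}$--orbit of $(x_1,x_2)$ on that set. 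Since $V_{b,a}(x,y)\to+\infty$ as $\|(x,y)\|\to\infty$, every level set is compact and the recurrence is persistent. Moreover the first coordinates of this orbit are the odd--indexed $x_n$ and the second coordinates the even--indexed ones, so
$$
\overline{\{x_n\}_{n\in\Z}}=\pi_1(\mathcal{O})\cup\pi_2(\mathcal{O}),
$$
where $\mathcal{O}$ is the closure of the $G_{b,a}$--orbit of $(x_1,x_2)$ and $\pi_1,\pi_2$ are the coordinate projections; the whole theorem thus becomes a statement about $\mathcal{O}$ and these projections.

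I would first identify $\mathcal{O}$ by reading off the topology of the level sets from Theorem~\ref{rotacions}. If $ab<1/4$ and $h=h_{\min}=\min V_{b,a}$, then $(x_1,x_2)\in\{V_{b,a}=h_{\min}\}=\{P_+,P_-\}$ is a fixed point of $G_{b,a}$, $\mathcal{O}$ is a point, and $\{x_n\}$ is the $2$--periodic sequence $(x_\pm,y_\pm,x_\pm,y_\pm,\dots)$, which is constant exactly when $x_+=y_+$; a short computation shows $x_+=y_+\Leftrightarrow a=b$. This is (a)(i). If $ab<1/4$, $h=0$ and $(x_1,x_2)\neq(0,0)$, then $(x_1,x_2)$ lies on one homoclinic loop of the saddle at the origin, on which $G_{b,a}$ is conjugated to a translation (Theorem~\ref{rotacions}(b)(i)); hence $G_{b,a}^{k}(x_1,x_2)\to(0,0)$ as $k\to\pm\infty$ and $x_n\to0$. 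This is (a)(ii). In every remaining case $\{V_{b,a}=h\}$ is a finite union of circles on which $G_{b,a}$ is conjugated to a rotation of rotation number $\rho_{b,a}(h)$; by Proposition~\ref{limit} and Remark~\ref{noconstant}(ii), $\rho_{b,a}$ is analytic and non--constant, so $\{h:\rho_{b,a}(h)\in\Q\}$ is countable and dense with complement of full measure. For $h$ in the first set $\mathcal{O}$ is finite and $\{x_n\}$ is periodic of (even) period $2q$ when $\rho_{b,a}(h)=p/q$ in lowest terms, with this period depending only on $h$ since $q$ is determined by $\rho_{b,a}(h)$ --- this is (iii); for $h$ in the second set $\mathcal{O}$ is a whole circle, leaving only the interval count in $\pi_1(\mathcal{O})\cup\pi_2(\mathcal{O})$ --- this is (iv).

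For the interval count I distinguish whether $\{V_{b,a}=h\}$ is one or two ovals. It is a single circle when $h>0$ (any $a,b$) and for all admissible $h$ when $ab\ge1/4$ (where $V_{b,a}>0$ off the origin, so only $h>0$ occurs); such a circle is invariant under $(x,y)\mapsto(-x,-y)$, hence $\pi_1(\mathcal{O})=[-r,r]$, $\pi_2(\mathcal{O})=[-s,s]$, and their union is the single interval $[-\max(r,s),\max(r,s)]$. This gives (b) and the cases $h>0$ of (a)(iv). If $ab<1/4$ and $h_{\min}<h<0$, then $\{V_{b,a}=h\}$ has two ovals, each around a nonzero fixed point; since $G_{b,a}$ commutes with $-\mathrm{id}$ and fixes $P_\pm$, it maps each oval to itself, so $\mathcal{O}$ is the oval $C_+$ around $P_+$ (or $C_-$, which yields the mirror statement). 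If $a=b$, $C_+$ is symmetric under $(x,y)\mapsto(y,x)$, so $\pi_1(C_+)=\pi_2(C_+)$ and the closure is always one interval. If $a\neq b$, say $b>a$ so that $P_+=(x_+,y_+)$ has $x_+>y_+$, then $\pi_1(C_+(h))$ and $\pi_2(C_+(h))$ shrink to $\{x_+\}$ and $\{y_+\}$ as $h\downarrow h_{\min}$, hence are disjoint near $h_{\min}$; as $V_{b,a}$ has no critical value in $(h_{\min},0)$, the ovals $C_+(h)$ are strictly nested increasing, so $\min\pi_1(C_+(h))$ strictly decreases and $\max\pi_2(C_+(h))$ strictly increases, and the two intervals meet at exactly one value of $h$. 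Finding the extreme values of $x$ on $\{V_{b,a}=h\}$ from $\partial_yV_{b,a}=0$ (i.e.\ $y=x/(2(b+x^2))$) and those of $y$ from $\partial_xV_{b,a}=0$ (i.e.\ $x=y/(2(a+y^2))$), and imposing that the relevant ones both equal a common value $c$, leads to $h=ac^2-c^2/(4(b+c^2))=bc^2-c^2/(4(a+c^2))$, whence $4(a+c^2)(b+c^2)=1$, so $c^2=\tfrac12\big(\sqrt{(a-b)^2+1}-(a+b)\big)$ and $h=-c^4$; this rearranges to the stated $h_+$, and one checks $h_+<0$ and that $h_{\min}<h_+$ is equivalent to $ab<1/4$. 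Therefore the closure consists of two intervals for $h_{\min}<h<h_+$ and of one interval for $h_+\le h<0$, which is (a)(iv).

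The genuinely delicate point is this last paragraph: identifying $h_+$ as the precise threshold. One must reduce ``$\pi_1(C_+(h))$ touches $\pi_2(C_+(h))$'' to the algebraic identity $4(a+c^2)(b+c^2)=1$, carry out the simplification $-c^4=h_+$ to the closed form in the statement, and --- to get an equivalence rather than a one--sided statement --- justify the monotonicity of $\min\pi_1(C_+(\cdot))$ and $\max\pi_2(C_+(\cdot))$, which follows from $V_{b,a}$ having no critical value in $(h_{\min},0)$ so that the family of ovals around $P_+$ sweeps out a punctured neighbourhood of $P_+$ monotonically. All the rest --- persistence, the criterion $x_+=y_+\Leftrightarrow a=b$, the density and measure assertions, and the symmetry argument for single circles --- is routine given Theorem~\ref{rotacions} and Proposition~\ref{limit}.
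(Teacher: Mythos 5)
Your proposal is correct and follows the paper's overall strategy (reduce everything to the $G_{b,a}$-dynamics via Theorem~\ref{rotacions}, use analyticity and non-constancy of $\rho_{b,a}$ from Proposition~\ref{limit} and Remark~\ref{noconstant} for the countable/dense dichotomy, then decide when two projected intervals overlap), but the heart of the argument --- the identification of the threshold $h_+$ --- is executed by a different route. The paper keeps track of two level curves, $\{V_{b,a}=h\}$ for the odd-indexed pairs and $\{V_{a,b}=h\}$ for the even-indexed ones, projects both onto the abscissa axis, locates the endpoints of these projections as zeros of the discriminants $R_1,R_2$ with respect to $y$, detects the possible bifurcation values through $\mathrm{Res}(R_1,R_2;x)$, and then fixes the relative positions of $h_{\min},h_{\pm}$ on each component of parameter space by a second resultant, sample points, and a continuity argument starting near $h_{\min}$. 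You instead work with a single oval $C_+(h)\subset\{V_{b,a}(x,y)=h\}$ and both coordinate projections (equivalent bookkeeping, since $V_{a,b}(x,y)=V_{b,a}(y,x)$), characterize the projection endpoints by the tangency conditions $\partial_y V_{b,a}=0$ and $\partial_x V_{b,a}=0$, and solve the touching condition in closed form, obtaining $4(a+c^2)(b+c^2)=1$ and $h=-c^4=h_+$; exhaustiveness of the threshold then comes from the monotone nesting of the ovals (no critical value of $V_{b,a}$ in $(h_{\min},0)$, since the critical points are only the origin and $P_{\pm}$) together with the limiting behaviour at $h_{\min}$ and $0^-$. This buys a cleaner, more geometric derivation of $h_+$ (and yields $h_{\min}<h_+<0$ for $a\neq b$ as a by-product), at the price of having to justify the nesting/monotonicity, which the paper sidesteps with the resultant-plus-continuity argument. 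Two small slips, neither fatal: the parenthetical claim that $h_{\min}<h_+$ is \emph{equivalent} to $ab<1/4$ is false (for instance $a=1$, $b=1/2$ gives $h_{\min}<h_+$ with $ab>1/4$, while $a=b$ with $ab<1/4$ gives $h_{\min}=h_+$, as the paper itself notes); only the implication for $a\neq b$, $ab<1/4$ is needed, and it already follows from your touching argument. Also, the invariance of each oval under $G_{b,a}$ follows from the fact that the map fixes $P_{\pm}$ and permutes the connected components of $\{V_{b,a}\le h\}$, not from its commutation with $-\mathrm{id}$.
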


\begin{proof}
We will use the techniques introduced in~\cite{CGM11a} where we
studied the 2-periodic Lyness recurrences.
 First notice that the relation between the terms of the recurrence and the iterates of the composition map is given by
$$
G_{b,a}(x_{2n-1},x_{2n})=(x_{2n+1},x_{2n+2}), \quad G_{a,b}(x_{2n},x_{2n+1})=(x_{2n+2},x_{2n+3}),
$$
where $(x_1,x_2)\in\R^2$ and $n\in\Z$. Also observe that a simple computation shows that
$$V_{b,a}(x,y)=V_{a,b}(G_a(x,y)),$$ which implies that the odd terms of
the sequence $\{x_n\}_{n\in\Z}$ are contained in the projection on
the $x$-axis of the level sets of the form
$\{V_{b,a}(x,y)=V_{b,a}(x_1,x_2)=h\}$ and the even ones are in the
corresponding projection of the level sets
$\{V_{a,b}(x,y)=V_{a,b}(G_{a}(x_1,x_2))=h\}$.

The result (i) is clear. Statement (ii) is a new consequence of
Theorem \ref{rotacions} and the fact that the sets
$\{V_{b,a}(x,y)=0\}\setminus\{(0,0)\}$ are two homoclinic loops
starting and ending at $(0,0)$. To prove (iii) and (iv) we consider
the cases (a) and (b) together.

Again from Theorem \ref{rotacions} we know that the sequence
starting at a point $(x_1,x_2)$ such that
$\rho_{b,a}(V_{b,a}(x_1,x_2))=p/q\in\Q$  with $\gcd(p,q)=1$, will be
$2q$-periodic. When it holds that
$\rho_{b,a}(V_{b,a}(x_1,x_2))\not\in\Q,$ then it will  densely fill
the projection of $\{V_{b,a}(x,y)=h\}$ (the odd terms) together with
the projection of $\{V_{a,b}(x,y)=h\}$ (the even terms).

Hence to end the proof we only need to show that the rational values of   $\rho_{a,b}(V_{b,a}(h))$ are taken only for a numerable dense set of values $h$ and, when $\rho_{a,b}(V_{b,a}(h))\not\in\Q$, to distinguish whether the above to projections do or do not overlap.

Let us prove first that $\rho_{a,b}(V_{b,a}(h)\in\Q$ only for a
numerable and dense set of values of~$h$. First recall that from
Proposition \ref{limit} and Remark \ref{noconstant} we know that the
function is analytic and nonconstant in the intervals where it is
defined. Next we use the well-known fact that a nonconstant analytic
function has a numerable number of zeroes. This implies that for any
$r\in\Q$ the set $\mathcal{P}_{r}:=\{h\in\R\,:\,
\rho_{b,a}(h)-r=0\}$ is numerable. Therefore $\cup_{r\in\Q}
\mathcal{P}_r$ is numerable as we wanted to see. That it is dense
follows from the continuity of all the involved functions.

To end the proof we only need to decide whether the projections into
the $x$-axis of $\{V_{b,a}(x,y)=h\}$ and $\{V_{a,b}(x,y)=h\}$
intersect or not.

 When $h>0$, for any positive $a$ and $b$ both ovals $\{V_{b,a}(x,y)=h\}$ and $\{V_{a,b}(x,y)=h\}$ surround the origin. Hence the adherence of $\{x_n\}_{n\in\Z}$
is always one closed interval.

Assume now that $h<0$ (and so $ab<1/4$). An easy computation shows that when $a\neq b$, $G_{a}(P_{\pm})$ is neither $P_{\pm}$ nor $P_{\mp}$, so the points $P_{\pm}$ give rise to two different $2$-periodic solutions of the recurrence. When $a=b$, $P_{\pm}$ are fixed points of $G_a$, and the situation is like the previous one. So in the following we also assume that $a\neq b$.

 Since $G_{a}(P_{\pm})$ is neither $P_{\pm}$ nor $P_{\mp}$, and $G_a$ is a diffeomorphism, it is possible to
take ovals of the form $\{V_{b,a}(x,y)=h\}$ with $h_{\min}\simeq
h<0$ whose projections into the abscissa axis do not overlap with
the corresponding ones of $\{V_{a,b}(x,y)=h\}$. However, by
increasing $h$ the projections will start to overlap.

Let be $I^{\pm}=I^{\pm}(a,b,h)$, and $J^{\pm}=J^{\pm}(a,b,h)$ be the
projections on the abscissa axis of the ovals contained in the level
sets given by $\{V_{b,a}(x,y)=h\}$ and $\{V_{a,b}(x,y)=h\}$
respectively, surrounding $P_+$ and $P_-$. We want to detect the
values of $h$ for which the intervals $I^+$ and $J^+$ (and $I^-$ and
$J^-$ respectively) have exactly one common point. First we seek for
their boundaries. Since the level sets are given by quartic curves,
quadratic with respect the $y$-variable, these points will
correspond with values of $x$ for which the discriminant of the
quadratic equation with respect to $y$ is zero. So, we compute
$$\begin{array}{l}
R_1(x,h,a,b):=\mbox{dis}\,(V_{b,a}(x,y)-h,y)=-4\,a\,{x}^{4}+ \left(
-4\,ab+4\,h+1 \right) {x}^{2}+4\,b\,h,\\
R_2(x,h,a,b):=\mbox{dis}\,(V_{a,b}(x,y)-h,y)=-4\,b\,{x}^{4}+ \left( -4\,ab+4\,h+1 \right) {x}^{2}+4\,a\,h.
\end{array}
$$
To search for relations among $a,b$ and $h$ for which these functions have some common solution, $x$, we compute the following resultant
$$\mathrm{Res}(R_1,R_2;x)=
256 \left( a-b \right) ^{4}{h}^{2} \left(16\,{h}^{2}+ \left( 16\,{a}^{2}+16\,{b}^{2}+8 \right) h+ \left( 4\,ba-1 \right) ^{2} \right) ^{2}$$
Since $a\neq b$,  the possible bifurcation values of $h$ are given by the roots of the quadratic polynomial
\begin{equation}\label{polinomi}
P_1(h):=16\,{h}^{2}+ \left( 16\,{a}^{2}+16\,{b}^{2}+8 \right) h+ \left( 4\,ba-1 \right) ^{2},
\end{equation}
which are
$$
h_{\pm}:=\frac{1}{2}\left[-({a}^{2}+{b}^{2})-\frac{1}{2}\pm \,\left( a+b \right) \,\sqrt { (a-b)^{2}+1 }\right].
$$
Then the number of disjoint intervals depends only on the relative positions of $h_{\min},h_{\pm}$ and $V_{b,a}(x_1,x_2)$.

To study the relative positions of $h_{\min}$ and $h_{\pm}$, we introduce the auxiliary polynomial $P_2(h)=(h+ab+\frac{1}{4})^2-ab,$ because one of
its roots is $h_{\min}$. Then
$$
\mathrm{Res}(P_1,P_2;h)=16\,(4ab-1)^2\,(a-b)^2.
$$
This means that we can split the parameter set
$\mathcal{A}:=\{(a,b),\,a,b>0,\,ab< 1/4\}$ into the following
disjoints sets:
$$
\mathcal{A}=\{a,b>0,\,ab< 1/4,a>b\}\cup \{a,b>0,\,ab< 1/4,a=b\}\cup\{a,b>0,\,ab< 1/4,a<b\}.
$$
Observe that, because of the properties of the resultant,  the relative positions of $h_{\min}$ and $h_{\pm}$ on the above subsets do not vary. So testing their values for some particular
parameters, we obtain that when $a\neq b$ we always have that $h_{\min}<h_+<0$ (we also notice that when $a=b$, $h_{-}<h_{\min}=h_{+}$).

In summary, the above considerations imply that when $a\neq b$, and since $G_a(P_{\pm})\neq P_{\pm}$, the projections into the abscissa axis of
all the ovals $\{V_{b,a}(x,y)=h\}$ and $\{V_{a,b}=h\}$ when $h_{\min}< h<h_+$ do not overlap, giving rise to two different intervals. When
$h_+\leq h<0$ they overlap giving only one interval.
\end{proof}

From the above theorem we get that the list of behaviours (i)-(iv)
listed in the introductions are the only ones that the sequences
generated by (\ref{ab}) can present. Moreover, given some values of
$a$ and $b$ the information about the possible periods of the
periodic sequences  can be obtained from Theorem \ref{peri}.

We end this subsection with a concrete example.

\begin{example}
Consider  $a=0.01$, $b=0.49$. For any initial condition $(x_1,x_2)$, set $h_0=V_{b,a}(x_1,x_2)$. From the
definitions given in Theorem \ref{main} we have that $h_{\min}=-0.1849$ and $h_{+}\simeq -0.0928$, so
\begin{itemize}
\item If $h_0\in(h_{\min},h_{+})$, the sequence generated by the recurrence (\ref{ab}) is either periodic (with even period) or it fills densely two intervals.
\item If $h_0\in[h_{+},0)$, the sequence generated by the recurrence (\ref{ab}) is either periodic (with period a multiple of $2$) or it fills densely one interval.
\end{itemize}

For instance if we consider the initial conditions
$(x_1,x_2)=(x_+-0.1,y_+-0.1)$, we have that
$h_0=H_{b,a}(x_1,x_2)\simeq -0.1459$, hence
$h_0\in(h_{\min},h_{+})$, and the corresponding sequence generated
by the recurrence (\ref{ab}) is either periodic or it densely fills
two disjoint intervals. This last option is the one that appears in
our numerical simulations, see Figure 3.
\begin{center}
\includegraphics[scale=0.40]{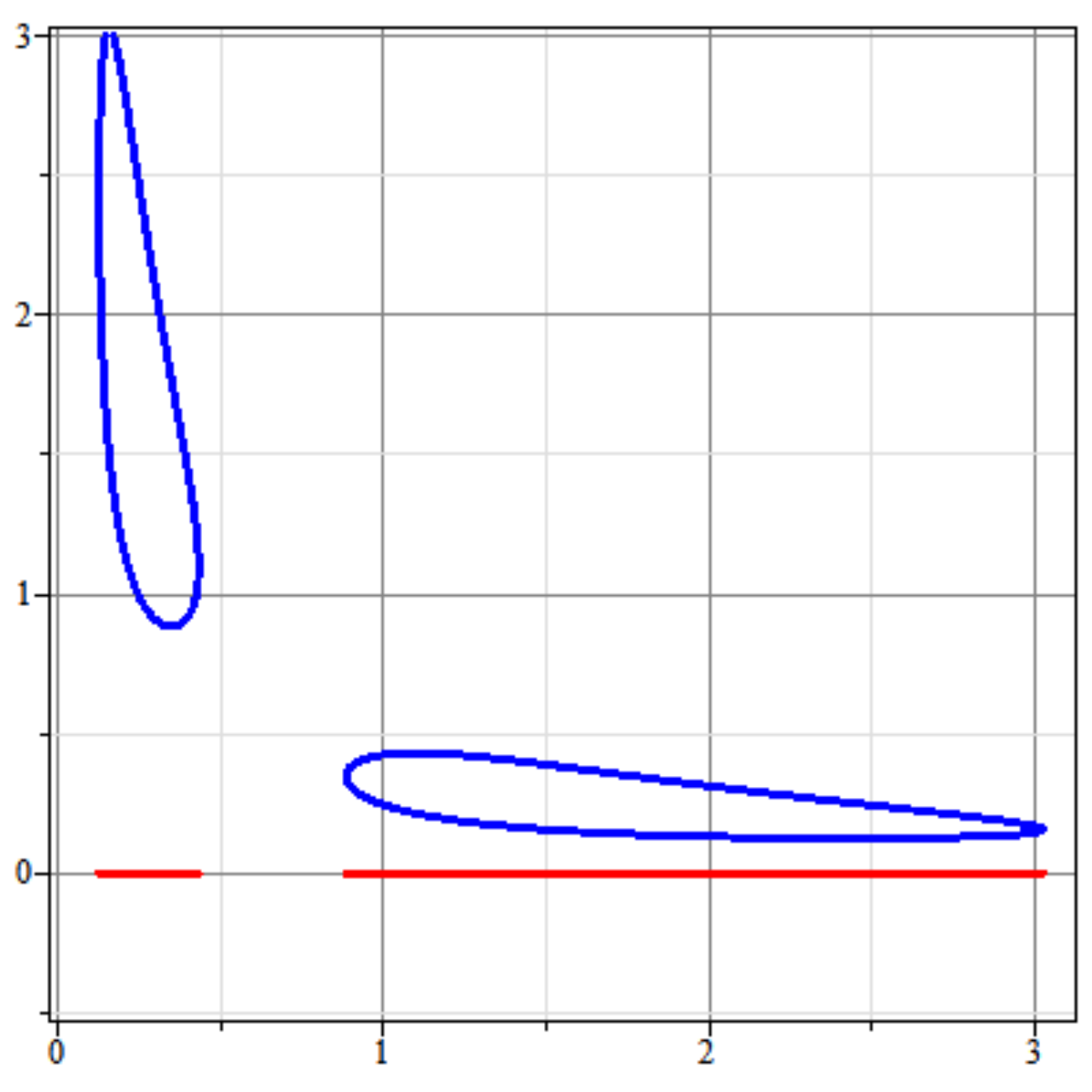}
\end{center}
\begin{center}
Figure 3: Here $a=0.01$, $b=0.49$. The ovals $\{V_{b,a}(x,y)=h_0\}$
and $\{V_{a,b}=h_0\}$ with $h_0\simeq \ -0.1459$ and their
projections on the abscissa axis.
\end{center}
\end{example}

\subsection{Integrability for general $k$-periodic Gumovski-Mira recurrences}

In this section we search for invariants of the form
\begin{equation}\label{formainvariant}
I(x,y,n)=\sum_{i+j=1}^4 I^{i,j}_{n} x^iy^j
\end{equation}
for  the recurrence (\ref{ab}),
but for any sequence $\{\beta_n\}_n$ of positive numbers.  This form is natural because includes the invariants known for $\{\beta_n\}_n$ constant or 2-periodic. As far as we know, this method was introduced  in \cite{FJL}. It was also
 used in \cite{CGM11b} to prove that non-autonomous Lyness recurrences, $x_{n+2}=(\alpha_n+x_{n+1})/x_n$, only admit invariants of a suitable form when  $\{\alpha_n\}_n$ are periodic with periods 1,2,3 and 6. If we could find a new integrable Gumovski-Mira recurrence all the methods used in this section could be adapted to study it. Unfortunately, next result shows that  for the Gumovsky-Mira non-autonomous recurrences, only the cases of $\{\beta_n\}_n$ with period 1 and 2 seem to be integrable. We have chosen  $I(x,y,n)$ with terms of degree at most 4 for simplicity. Bigger degrees could be also studied by using the same approach.

\begin{propo}\label{noautonom} The non-autonomous $k$-periodic recurrence (\ref{mirainte}), with  $\{\beta_n\}_n$ a sequence of positive numbers,
has invariants of the form (\ref{formainvariant}) if and only if $\{\beta_n\}_n$ is $k$-periodic and $k\in\{1,2\}$. Moreover, in these cases the invariants  are $I(x,y,n)=\beta_n x^2+ \beta_{n+1}
y^2+\gamma x^2y^2-\gamma xy$, for any $\gamma\ne0$. \end{propo}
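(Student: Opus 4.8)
The plan is to substitute the general ansatz \eqref{formainvariant} into the invariance condition $I(x_n,x_{n+1},n)=I(x_{n+1},x_{n+2},n+1)$ along orbits of \eqref{mirainte}, and to extract a system of functional equations for the coefficient sequences $I^{i,j}_n$. Concretely, writing $x=x_n$, $y=x_{n+1}$ and $z=x_{n+2}=-x+\frac{y}{\beta_n+y^2}$, the requirement is that
\[
I(x,y,n)=I\!\left(y,\,-x+\frac{y}{\beta_n+y^2},\,n+1\right)
\]
identically in $(x,y)$. Clearing the denominator $(\beta_n+y^2)$ to the appropriate power turns this into a polynomial identity in $x$ and $y$ whose coefficients (which depend on $n$ through $\beta_n$ and through the $I^{i,j}_n$, $I^{i,j}_{n+1}$) must all vanish. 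The first main step is to exploit the $x$-dependence: collecting powers of $x$ gives several equations, the ones of highest degree in $x$ being the cleanest. I expect these to force the "diagonal" structure already visible in the 2-periodic case, namely that the pure $x^i y^j$ terms with $i+j$ odd or with a mismatched parity drop out, that $I^{2,0}_n$ and $I^{0,2}_n$ are linked to $\beta_n$, and that the only surviving degree-4 term is a multiple of $x^2y^2$ with an $n$-independent coefficient, say $\gamma$. Similarly the genuine first-degree and third-degree parts must vanish, and the cross term must be $-\gamma xy$ with the same $\gamma$.

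The second step is to feed this reduced form $I(x,y,n)=\beta_n x^2+c_n y^2+\gamma x^2y^2-\gamma xy+(\text{lower order that we have shown vanishes})$ back into the invariance relation and read off recursions. Matching the coefficient of $y^2$ across the equation should give $c_n=\beta_{n+1}$, so that $I(x,y,n)=\beta_n x^2+\beta_{n+1}y^2+\gamma x^2y^2-\gamma xy$. The remaining content of the identity then collapses to a single scalar constraint relating $\beta_n$ and $\beta_{n+1}$; I expect it to be of the form $\beta_{n+2}=\beta_n$, i.e. $\{\beta_n\}_n$ must be $2$-periodic. Combined with the hypothesis that $\{\beta_n\}_n$ is $k$-periodic, $k$-periodicity together with $2$-periodicity forces $k\in\{1,2\}$ (for $k$ odd, $2$-periodicity plus $k$-periodicity gives $1$-periodicity; for $k$ even it gives $2$-periodicity, and a genuinely $4$- or $6$-periodic sequence cannot also be $2$-periodic). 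Conversely, for $k\in\{1,2\}$ one checks directly that $I(x,y,n)=\beta_n x^2+\beta_{n+1}y^2+\gamma x^2y^2-\gamma xy$ is invariant, which is the already-known computation recalled in Section~\ref{casintegrable}; this gives the "if" direction and the explicit form of the invariants.

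The main obstacle is bookkeeping: the polynomial identity after clearing denominators has many monomials $x^iy^j$, and one must organize the vanishing conditions so that the parity/degree reductions are obtained in the right order, since some coefficient equations are only informative after earlier ones have been used. A secondary subtlety is degeneracy: for special values (e.g.\ constant $\beta_n$, or small orbits) some of the coefficient equations may be automatically satisfied, so one must make sure the argument that kills the unwanted terms uses identities valid for all $(x,y)$ and not just along a particular orbit — this is why it is essential to impose invariance as a formal identity in $(x,y)$ rather than merely along one sequence. Finally, one should note the harmless normalization: the invariant is only defined up to a multiplicative constant and an additive constant, which is why $\gamma\neq 0$ is arbitrary and why no term of degree $0$ appears. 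The computations here are of the same flavour as those carried out for the Lyness case in \cite{CGM11b}, and can be checked with a computer algebra system.
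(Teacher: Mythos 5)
Your plan is exactly the paper's argument: impose the invariance identity $I\bigl(y,-x+\frac{y}{\beta_n+y^2},n+1\bigr)-I(x,y,n)=0$ as a formal identity in $(x,y)$ for all $n$, clear denominators, force all monomial coefficients to vanish, extract in particular $\beta_{n+2}=\beta_n$ (hence $k\in\{1,2\}$), and verify the explicit invariant in those cases. The paper compresses the coefficient bookkeeping into ``after some work,'' so your proposal is correct and follows essentially the same route, at a comparable level of detail.
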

\begin{proof} The condition that a function $I(x,y,n)$ of the form (\ref{formainvariant}) is a non-autonomous invariant of the recurrence (\ref{ab})
writes as
$$
I\left(y,-x+\frac{y}{\beta_n+y^2},n+1\right)-I(x,y,n)=0,
$$
for all $(x,y)\in\R^2$ and all $n\in\mathbb{N}$. Imposing that all
the monomials of the left hand side of the above relation vanish and
after some work we get in particular that $\beta_{n+2}=\beta_n$ and
so that such an invariant exists only when $\{\beta_n\}_n$ is
constant or 2-periodic. For both cases we also obtain the explicit
form of the known invariants.
\end{proof}

\section{The chaotic case}\label{se:3}
In this section  we consider the non-autonomous recurrence
\begin{equation}\label{Fab}
x_{n+2}=-x_{n}+\frac{\alpha_n x_{n+1}}{1+x_{n+1}^2},
\end{equation}
with
\begin{equation*}
\alpha_n\,=\,\left\{\begin{array}{lllr} a>0&{\mbox{for}}&n=2k,&\\
b>0&{\mbox{for}}&\,n=2k+1,&k\in\N.
\end{array}\right.
\end{equation*}

Recall that for each fixed value of $a$, the map $F_a$ associated to the corresponding autonomous recurrence
\begin{equation}\label{aut}
x_{n+2}=-x_{n}+\frac{a x_{n+1}}{1+x_{n+1}^2},
\end{equation}
has the first integral $$W_a(x,y)=x^2y^2+ x^2+y^2-a xy.$$ Moreover
it is conjugated to $G_{1/a}$. Therefore  $F_a$ and $F_{a,a}$ are
integrable and the dynamical systems generated by them as well as
the recurrences generated by (\ref{aut}) can be easily described
using the results of the previous section.

\subsection{The map $F_{b,a}$ as a perturbed twist}

Here we will see that the map $F_{b,a}$, which writes as,
 \begin{equation}\label{mapfba}
F_{b,a}(x,y)=\left(-x+\frac{ay}{1+y^2}, -y +\frac{b(1+y^2)(ay-x(1+y^2))}{(1+y^2)^2+(ay-x(1+y^2))^2}\right),
\end{equation}
is far from preserving neither the integrability nor the
translation-like dynamics, proved for $G_{b,a}$. In fact, our
numerical simulations show  all the features of a non-integrable
perturbed twist map, that is: many invariant curves and, between
them, couples of orbits of $p$-periodic points (for several values
of $p$), half of them of elliptic type and the other half of
hyperbolic saddle type, see for instance  \cite[Chapter 6]{AP}.
Notice that the sequences corresponding to the stable manifolds of
the hyperbolic saddles are precisely the ones that give rise to the
behaviour (v) described in the introduction.

More precisely, for instance, when $a \in (0,2)$ the positive level sets of $W_a$ in $\R^2$ are closed curves surrounding the origin, which is
the unique fixed point of $F_{a}$ (see Figure 1 (a)) and we know that the action of $F_{a}$
restricted to each of them is conjugated to a rotation. In fact, both $F_{a}$ and $F_a^2$ can be
seen as area preserving integrable twist diffeomorphisms since, at least locally, the rotation number function associated $F_a$ to most level
set of $W_a$ is monotonous. Therefore when we consider $F_{a+\varepsilon,a}$, for $\varepsilon$ small enough, it can be thought as a
perturbed twist map. Our numerical experiments show that this perturbed twist map is no more integrable and
that this behaviour remains even when the values of $b$ are not close to
the ones of $a$. This is the situation depicted in Figure 1 (b) and (c). Unfortunately we have not been able to prove the non-integrability of $F_{b,a}$ for $b\ne a.$

When $a>2$ the negative level sets of $W_a$ are two nests of closed
curves surrounding two fixed points of $F_{a}$. These two nests are
surrounded by a polycycle composed by a saddle point of the map and
a couple of homoclinic curves surrounding both elliptic fixed
points. The rest of the level sets are given by closed curves
surrounding the polycycle, see Figure~2. The dynamics of $F_a$ on
the invariant level sets of $W_a$ is again translation-like. In this
case, when we consider $b=a+\varepsilon$ for $\varepsilon$ small
enough, near the nests of $F_a$ the map $F_{b,a}$ can be seen again
as a non-integrable perturbation of the twist map $F_a^2=F_{a,a}$,
and the same phenomena than above  appear. These behaviours are also
observable when the values of $b$ are not close to the ones of $a$.
This is the situation exemplified in Figure 4, where it can also be
observed orbits of $F_{b,a}$ which seem to fill densely a  sets of
positive Lebesgue measure in $\R^2$.

\begin{center}
\includegraphics[scale=0.45]{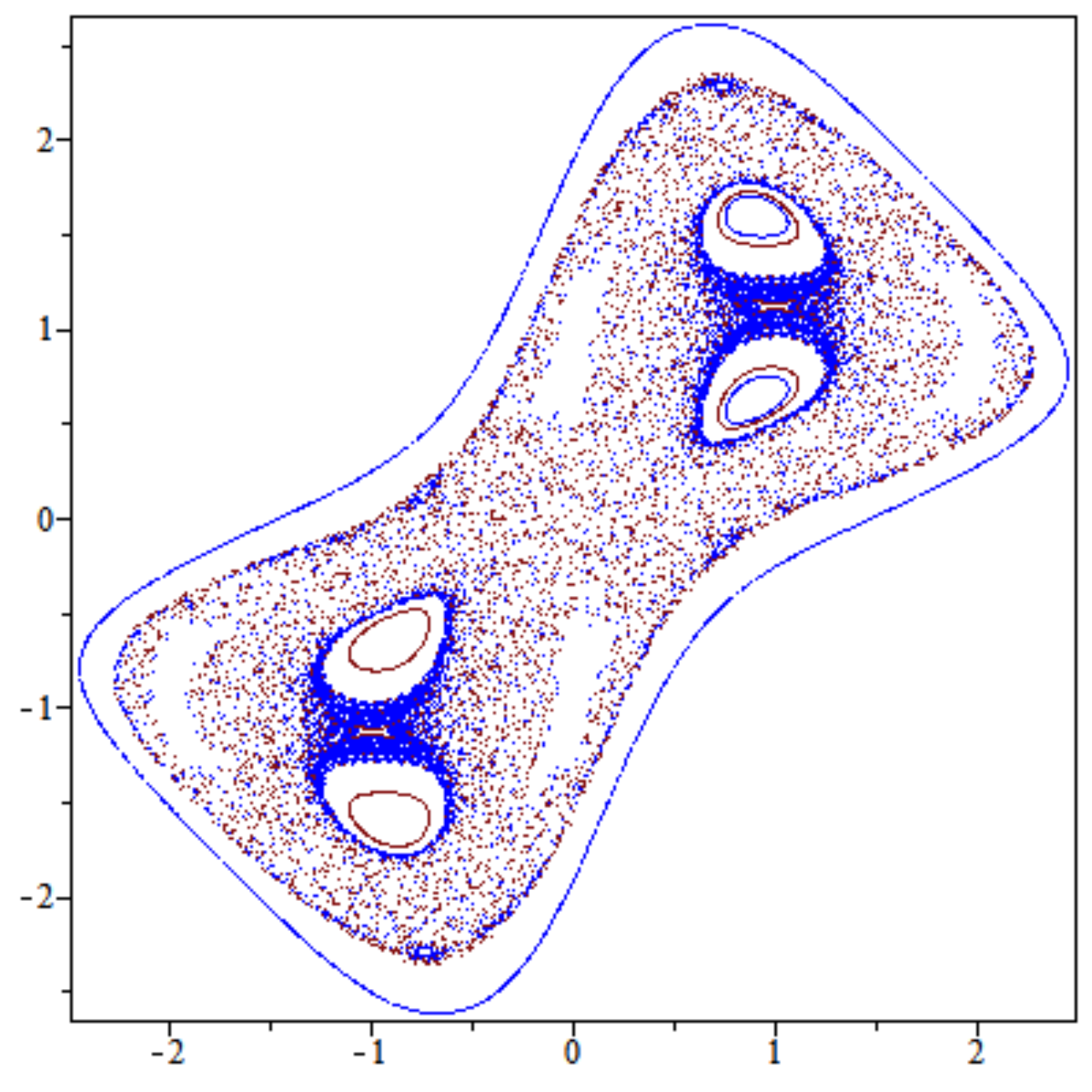}\hspace{0.5cm}
\includegraphics[scale=0.45]{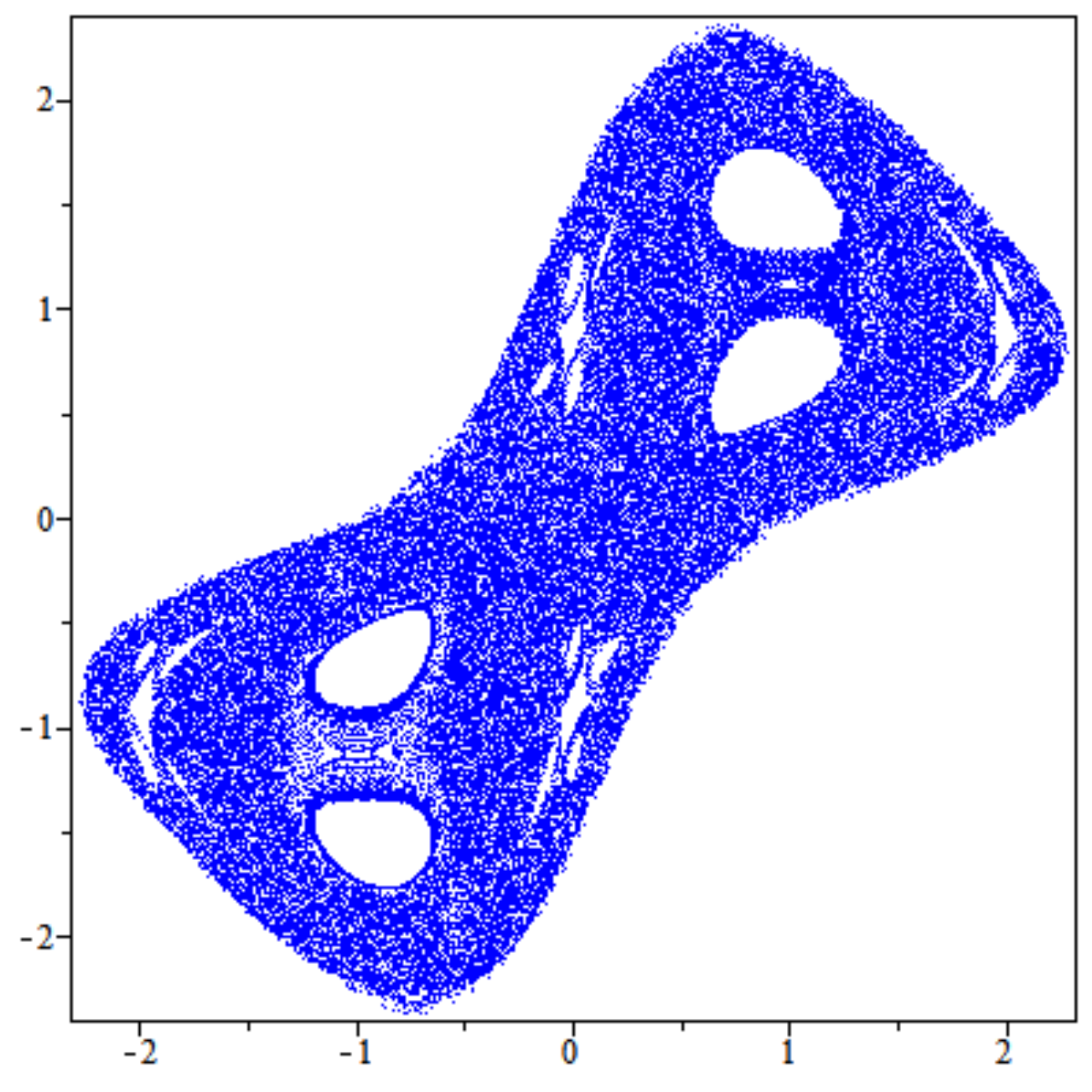}\\
\hspace{0.4cm} (a) \hspace{6.2cm} (b) \hspace{3.4cm}
\end{center}
\begin{center}
Figure 4: (a) Several orbits of the map $F_{9/2,4}.$  (b) Plot of $10^5$ iterates of the orbit of  $F_{9/2,4}$ starting at
 $(1.18,0.1)$.
\end{center}

From the viewpoint of the solutions $\{x_n\}_{n\in\Z}$ of the equation (\ref{Fab}), another difference is that the number of disjoint
intervals of the adherence of an orbit can be more than $2$, as it is shown in the following examples.\newline

\begin{example}\label{exa}
Consider  $a=2$ and $b=1/2$. We give some  initial conditions giving rise to
solutions $\{x_n\}_{n\in\Z}$ which, numerically, seems to densely fill more than 2 intervals.
\begin{center}
\begin{tabular}{|c|c|}
\hline
Initial conditions & Intervals \\
\hline
$(1.10,0.5)$ & 7 \\
$(1.25,0.5)$ & 16 \\
$(1.29,0.5)$ & 1 \\
$(1.30,0.5)$ & 23 \\
$(1.35,0.5)$ & 1 \\
$(1.40,0.5)$ & 6 \\
$(1.48,0.5)$ & 6 \\
\hline
\end{tabular}
\end{center}
In Figure 5 are depicted the $6$ intervals corresponding to the initial condition $(1.48,0.5)$, and the $16$ intervals of the initial condition
$(1.25,0.5)$
\end{example}

\begin{center}
\includegraphics[scale=0.45]{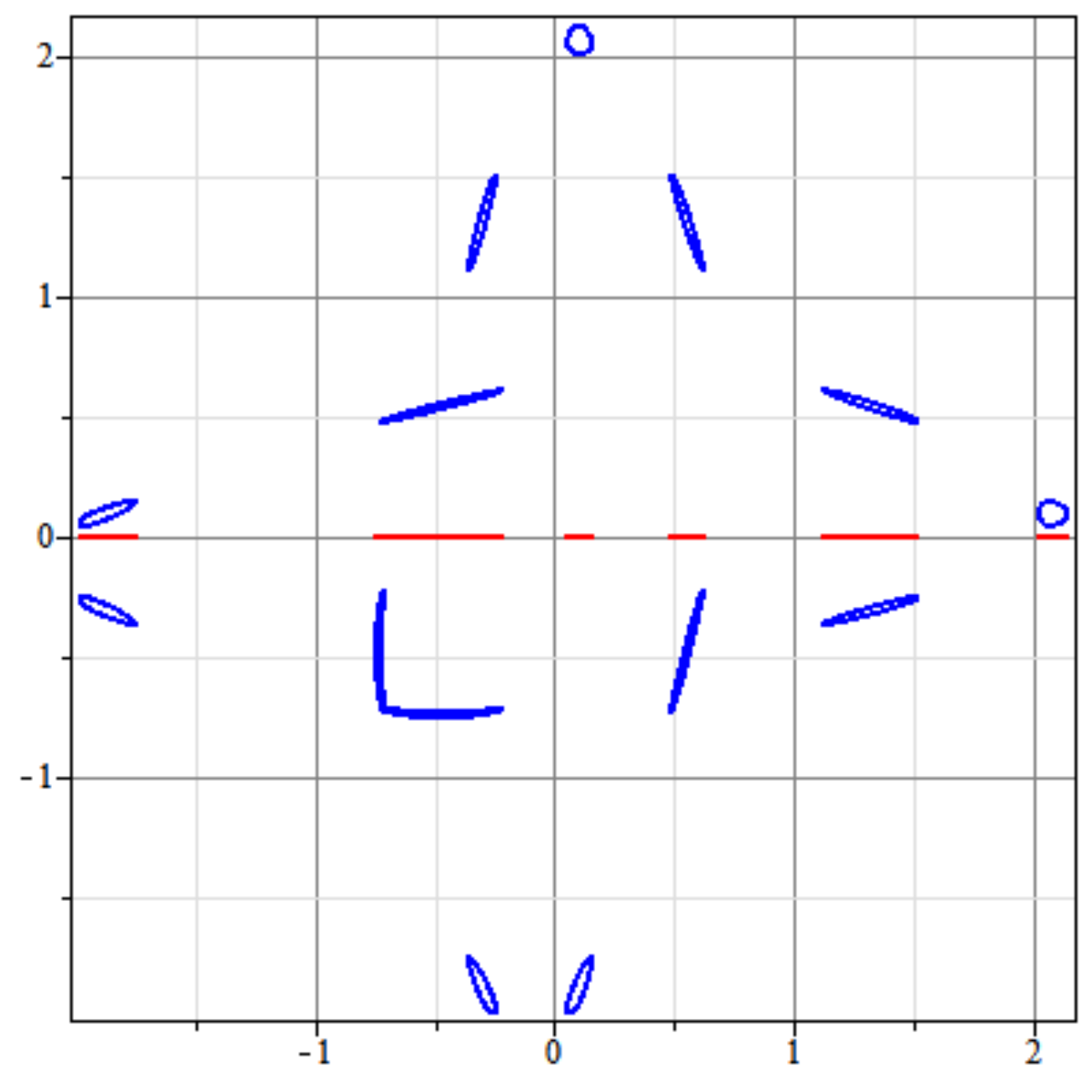}\hspace{0.5cm}
\includegraphics[scale=0.45]{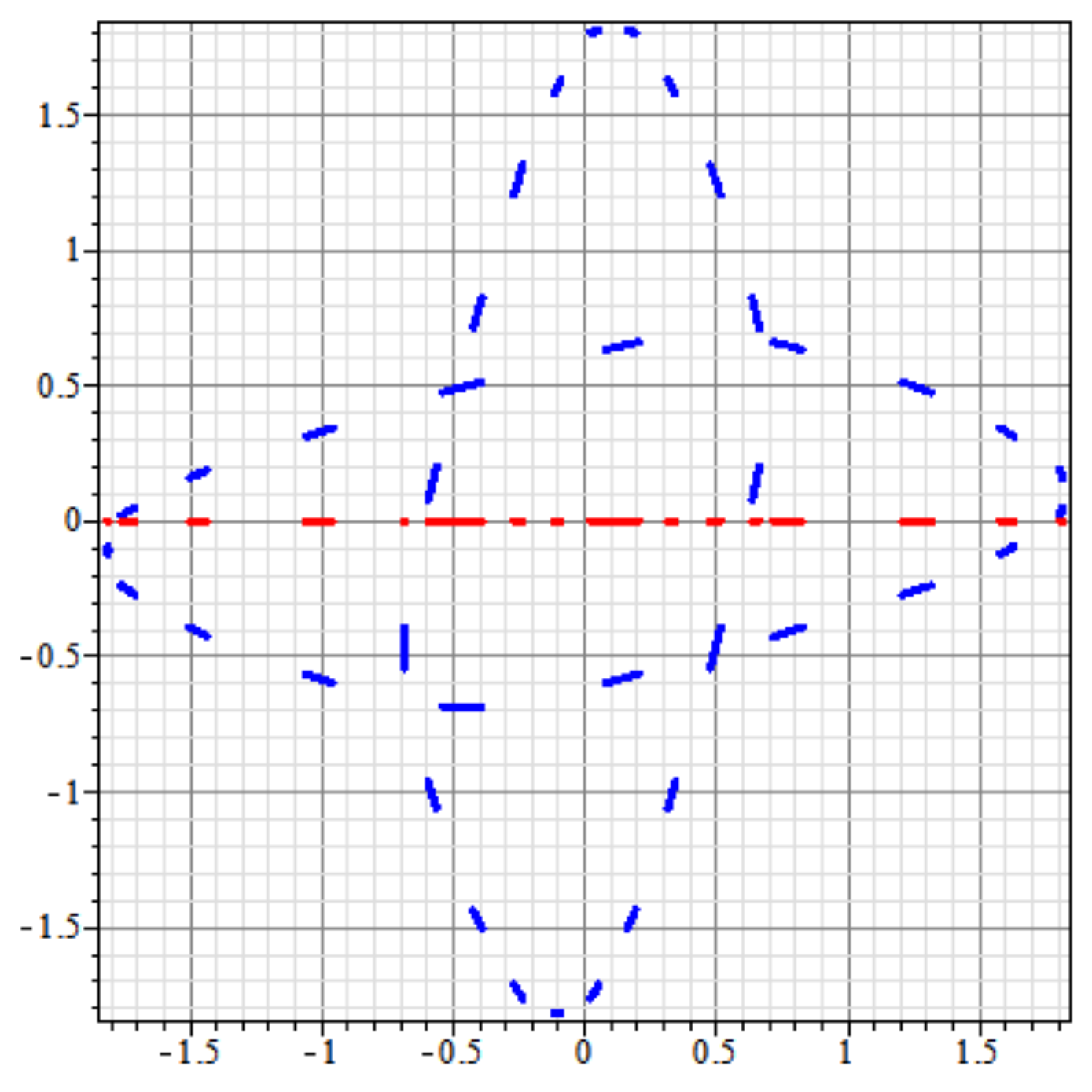}\\
\hspace{0.4cm} (a) \hspace{6.2cm} (b) \hspace{3.4cm}
\end{center}
\begin{center}
Figure 5: Orbits of  $F_{1/2,2}$ and their images under  $F_2$
corresponding to the initial conditions $(1.48,0.5)$ and
$(1.25,0.5)$ respectively, together with their corresponding
projections on the abscissa axis.
\end{center}
\fiexemple

\begin{example} An example similar to  Example \ref{exa} with $a=4$ and $b=9/2$.
\begin{center}
\begin{tabular}{|c|c|}
\hline
Initial conditions & Intervals \\
\hline
$(2.5,0.5)$ & 4\\
$(2.9,0.5)$ & 1\\
$(3.0,0.5)$ & 5 \\
$(3.1,0.5)$ & 3 \\
$(3.2,0.5)$ & 1 \\
$(3.6,0.5)$ & 4 \\
\hline
\end{tabular}
\end{center}
In Figure 6 we show the $4$ intervals corresponding to the initial condition $(2.5,0.5)$, and the $5$ intervals of the initial condition
$(3,0.5)$.
\end{example}

\begin{center}
\includegraphics[scale=0.45]{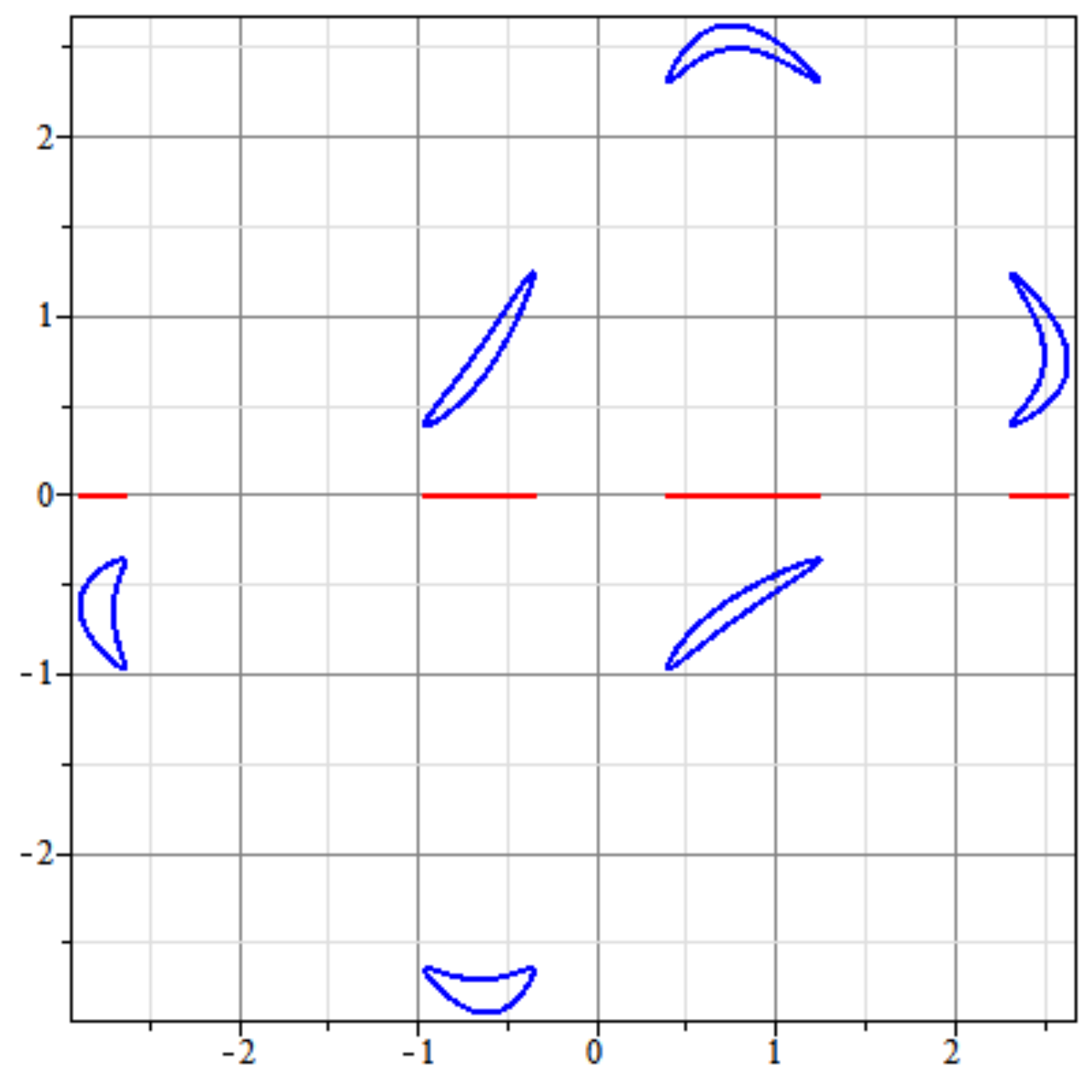}\hspace{0.5cm}
\includegraphics[scale=0.45]{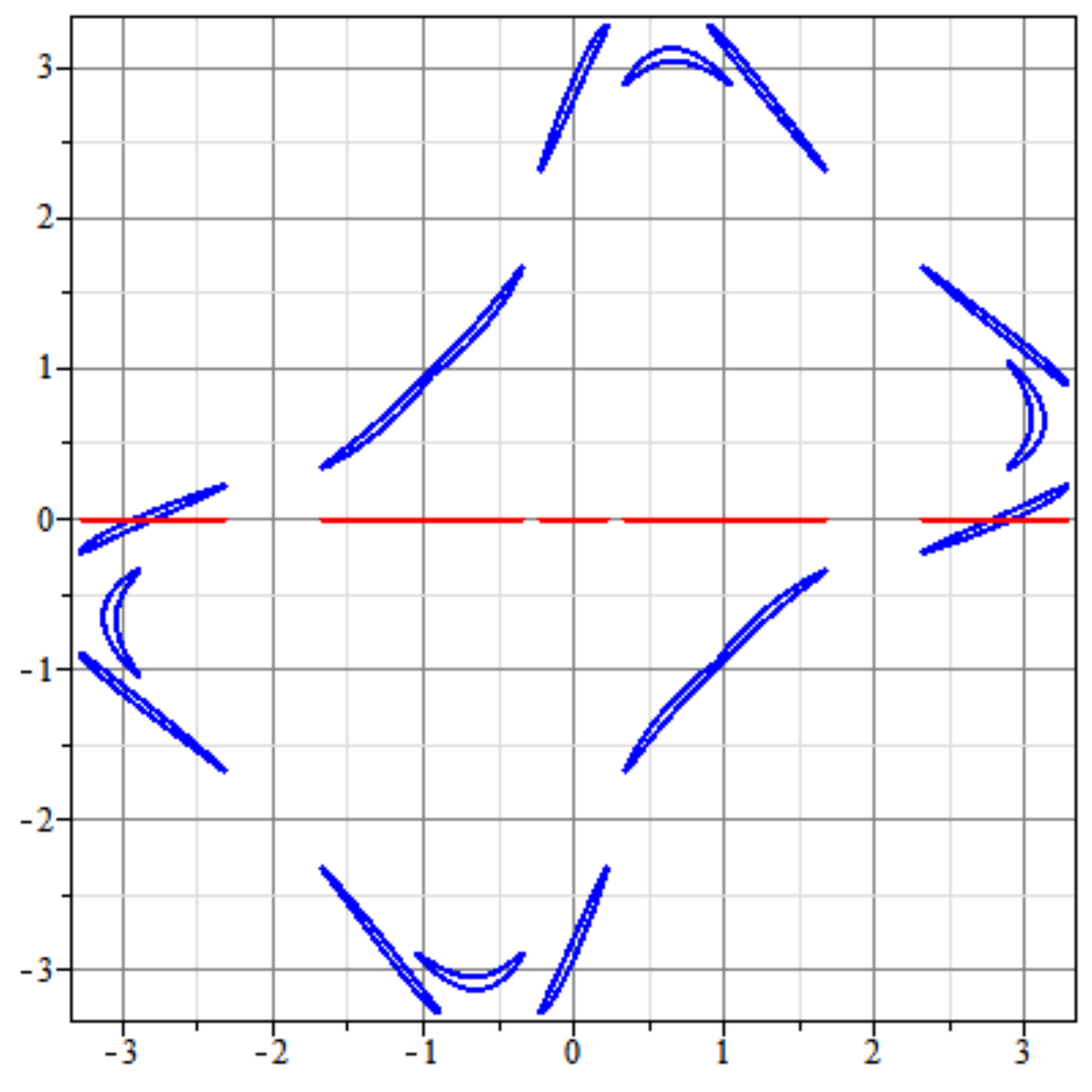}\\
\hspace{0.4cm} (a) \hspace{6.2cm} (b) \hspace{3.4cm}
\end{center}
\begin{center}
Figure 6: Orbits of  $F_{9/2,4}$ and their images under $F_4$
corresponding to the initial conditions $(2.5,0.5)$ and $(3,0.5)$
respectively, and their corresponding projections on the abscissa
axis.
\end{center}

\subsection{Local study of $F_{b,a}$ near the origin}

The origin is an elliptic fixed point of the area preserving map $F_{b,a}$ when $ab<4.$ In fact the characteristic polynomial of $DF_{b,a}(0,0)$ is
\[
P_{a,b}(\lambda)=\lambda^2+(2-ba)\lambda+1
\]
and $\mathrm{Spec}(DF_{b,a}(0,0))=\left\{\lambda^+,\lambda^-
\right\}$, where $\lambda^\pm:=-1+\frac{ab}{2}\pm \frac{i}{2}\,\sqrt
{ab(4-ab)}$.

 Computing the resultants
\[
\mbox{Res}(P_{a,b}(\lambda),\lambda^k-1;\lambda),\quad\mbox{for}\quad k=1,2,3
\]
we know that for $ab\ne1$ the eigenvalues $\lambda^\pm$  are under  the non-resonance conditions  of Moser's Stability Theorem, see \cite{AM,RCZ}. That is, they are not $k^{\mbox{th}}$-roots of the unity for $k=1,2,3.$ We remark that Example \ref{exa} seems to show that the situation $ab=1$ is not special at all.

We have obtained the local Birkhoff normal form of $F_{b,a}$ at the origin using \cite{DOP,OP}. It is given by $z\to \lambda^+
z\left(1+i\sigma z \bar{z}+O_3(z,\bar z)\right),$ where $z=x+iy\in\C$ and
$$
\sigma= {\frac {3\left( b+a \right) \sqrt {ab \left( 4-ab \right) } }{4b \left( 4-ab \right) }}\neq 0.$$

So, from Moser's Theorem  we know that for $ab\ne1$, in every
neighborhood of the origin, there exist infinitely  invariant closed
curves surrounding it. Although the theorem does not allow to make
the distinction between the integrable and the non-integrable cases,
by using the Poincar\'{e}-Birkhoff Theorem, see \cite{RCZ}, we can
ensure the existence of periodic orbits having all periods bigger
that a given one, between each two of these invariant closed curves.
Therefore we have proved that for the corresponding values of $a$
and $b$ the recurrence (\ref{Fab}) generates periodic sequences of
any even period, bigger that some constant, which depends on  $a$
and~$b$. These sequences correspond to the ones described in item
(i) of the introduction.


\begin{thebibliography}{9}


\bibitem{AM} R.~Abraham, J.E.~Marsden. Foundations of
Mechanics, Second Edition. Addison-Wesley, Redwood City, CA 1987.

\bibitem{AP} D.K.~Arrowsmith, C.M.~Place. An introduction to
dynamical systems. Cambridge University Press, Cambridge 1990.




\bibitem{BR2} G.~Bastien, M.~Rogalski. {\sl On algebraic difference equations $u_{n+2}+u_n=\psi(u_{n+1})$
in $\R$ related to a family of elliptic quartics in the plane}, J.
Math. Anal. Appl. 326 (2007), 822--844.

\bibitem{CGM-ly} A.~Cima, A.~Gasull, V.~Ma\~{n}osa. {\sl Dynamics of the third order Lyness difference
equation},  J. Difference Equations and Appl. 13 (2007), 855--884.




\bibitem{CGM} A.~Cima, A.~Gasull, V.~Ma\~{n}osa.
{\sl Studying discrete dynamical systems through differential equations}, J. Differential Equations 244 (2008), 630--648.


\bibitem{CGM11a} A.~Cima, A.~Gasull, V.~Ma\~{n}osa.
{\sl On two and three periodic Lyness difference equations}, To
appear in J. Difference Equations and Appl (2011).


\bibitem{CGM11b} A.~Cima, A.~Gasull, V.~Ma\~{n}osa.
{\sl Integrability and non-integrability of periodic non-autonomous Lyness recurrences.} Preprint. arXiv:1012.4925v1 [math.DS]



\bibitem{CJK} C.A.~Clark, E.J.~Janowski, M.R.S.~Kulenovi\'{c}.
{\sl Stability of the Gumowski-Mira equation with period--two coefficient}, J. Math. Anal. Appl. (2005), 292--304.


\bibitem{C} J.M.~Cushing. {\sl Periodically forced nonlinear systems
of difference equations}, J. Difference Equations and Appl. 3
(1998), 547--561.

\bibitem{CH} J.M.~Cushing, S.M.~Henson. {\sl Global dynamics of some periodically forced, monotone difference equations},
 J. Difference Equations and Appl. 7 (2001), 859--872.

\bibitem{CH2} J.M.~Cushing, S.M.~Henson. {\sl A Periodically forced Beverton-Holt
equation},  J. Difference Equations and Appl. 8 (2002), 1119--1120.

\bibitem{DOP} M.J.~Dias-Carneiro, S.~Oliffson-Kamphorst,
S.~Pinto-de-Carvalho. {\sl Calculation of the Birkhoff Normal Form
and the First Coefficient} Maple Worksheet File
\texttt{3NormalForm.mws} Available from World Wide Web
(\texttt{http://www.mat.ufmg.br/comed/2004/d2004/}), 2004.

\bibitem{ES1} S.~Elaydi, R.J.~Sacker. \textsl{Global stability of periodic orbits of non-autonomous difference equations and population biology.} J. Differential Equations 208 (2005), 258-273.


\bibitem{ES2} S.~Elaydi, R.J.~Sacker. \textsl{Periodic difference equations, population biology and the
Cushing-Henson conjectures.} Math. Biosci. 201 (2006), 195-207.

\bibitem{FJL} J.~Feuer, E.J.~Janowski, G.~Ladas.
{\sl Invariants for some rational recursive sequences with periodic coefficients}, {J. Difference Equations and Appl.} {2} (1996), 167--174.

\bibitem{H} F.C.~Hoppensteadt. Mathematical methods of population
biology. Cambridge University Press, Cambridge, 1982.

\bibitem{GM} I.~Gumovski, Ch.~Mira. Recurrences and discrete
dynamic systems. Lecture Notes in Mathematics 809. Springer Verlag,
Berlin, 1980.

\bibitem{M} J.D.~Murray. Mathematical biology. Springer Verlag, Berlin 1993.

\bibitem{OP} S.~Oliffson-Kamphorst, S.~Pinto-de-Carvalho. {\sl The first Birkhoff coefficient and the stability
of 2-periodic orbits on billiards}, Experimental Math. 14 (2005), 299--306.

\bibitem{RCZ} C.G.~Ragazzo, M.~J.~Dias-Carneiro, S.~Addas-Zanata. {\sl Introdu\c{c}\~ ao \`{a} din\^{a}mica de aplica\c{c}\~oes do tipo twist}, Publica\c{c}\~oes Matem\'{a}ticas, IMPA, 2005.

\bibitem{Sv} R.J.~Sacker, H.~von Bremen. \textsl{A conjecture on the stability of periodic solutions of Ricker's equation wityh periodic
parameters}. Applied Mathematics and Computation. 217  (2010)
1213--1219.

\bibitem{T} H.R.~Thieme. Mathematics in population
biology. Princeton University Press, Princeton NJ, 2003.



\end{thebibliography}
\end{document}